\documentclass[twoside, 12pt]{article}
\usepackage{amssymb, amsmath, mathrsfs, amsthm}
\usepackage{graphicx}
\usepackage{color}
\usepackage{authblk}

\usepackage[top=1in, bottom=1in, left=1in, right=1in]{geometry}
\usepackage{float, caption, subcaption}
\usepackage[numbers,sort&compress]{natbib}
\usepackage{indentfirst}
\usepackage{setspace}
\newtheorem{thm}{Theorem}[section]
\newtheorem{cor}[thm]{Corollary}
\newtheorem{lem}[thm]{Lemma}

\newtheorem{obs}[thm]{Observation}
\newtheorem{pro}[thm]{Proposition}

\newcommand\equ[2]
{
\begin{equation}\label{#1}
	#2
\end{equation}
}

\usepackage{color}

\newcommand \good[1]
{
V_{\mathrm{good}}(#1)
}

\newcommand \free[1]
{
V_{\mathrm{free}}^{0}(#1)
}
\newcommand \Free[1]
{
V_{\mathrm{free}}^{-}(#1)
}
\newcommand \bad[1]
{
V_{\mathrm{bad}}(#1)
}

\newcommand \fix[1]
{
V_{\mathrm{fix}}(#1)
}
\newcommand \fixk[1]
{
V_{\mathrm{fix}}^{k}(#1)
}

\begin{document}
\title{Vertex Removal and Vertex-Edge Domination\thanks{This work is supported by the National Natural Science Foundation of China (Nos.~12461065 and 12261074).}
}

\author[1\thanks{Email: liminhuimath@163.com.}]
{Minhui Li}

\author[2,3\thanks{Corresponding author. Email: zhangshumin@qhnu.edu.cn.}]
{Shumin Zhang}
\author[4\thanks{Email: fengming.dong@nie.edu.sg and donggraph@163.com.}]
{Fengming Dong}

\affil[1]{\footnotesize Center for General Education, Qinghai Institute of Technology, Xining, Qinghai 810006, China}

\affil[2]{\footnotesize
School of Mathematics and Statistics, Qinghai Normal University, Xining, Qinghai 810001, China}

\affil[3]{\footnotesize
The State Key Laboratory of Tibetan Intelligent Information Processing and Application, Xining, Qinghai 810008, China}

\affil[4]{\footnotesize National Institute of Education, Nanyang Technological University, Singapore}
	
\maketitle
\begin{abstract}
A vertex $v$ of a graph $G$ $ve$-{\it dominates} every edge incident with a vertex in its closed neighborhood $N_{G}[v]$.
A set of vertices is a {\it vertex-edge dominating set} ($ve$-{\it dominating set}) if every edge
of $G$ is $ve$-dominated by some vertex in the set.
The minimum cardinality of such a set is the {\it vertex-edge
domination number} $\gamma_{ve}(G)$, also known as the isolation number.
In this paper, we study the effect of vertex removal on
$\gamma_{ve}(G)$, establish sharp bounds on $\gamma_{ve}(G-v)$ and characterize the vertices whose removal decreases, preserves, or increases
$\gamma_{ve}(G)$.
Let $V^0(G)$ and $V^+(G)$ denote the sets of vertices whose removal
preserves and increases $\gamma_{ve}(G)$, respectively,
and let $V_{\mathrm{fix}}^0(G)$ consist of the vertices in
$V^0(G)$ belonging to every minimum $ve$-dominating set.
We prove that
\[
|V^0(G)|
\geq
3|V_{\mathrm{fix}}^0(G)|
+
\sum_{v\in V^+(G)}
\bigl(\gamma_{ve}(G-v)-\gamma_{ve}(G)+1\bigr).
\]
In particular, $|V^0(G)|\geq 2|V^+(G)|$.
We also prove that every vertex in $V_{\mathrm{fix}}^0(G)$
lies on a cycle and that $V^0(T)\neq\emptyset$ for every
tree $T$ of order at least three.
\end{abstract}

\noindent{\bf Keywords:} vertex-edge domination; isolation number; vertex removal; tree; private edge \\[2mm]
{\bf Mathematics Subject Classification (2020):} 05C69, 05C05, 05C76.

\section{Introduction}
All graphs considered in this paper are finite, simple, and undirected.
For a graph $G$, let $V(G)$ and $E(G)$ denote its vertex set and edge set,
respectively. The \textit{open neighborhood} of a vertex $v$ is
$N_G(v)=\{u\in V(G):uv\in E(G)\}$, and its \textit{closed neighborhood} is
$N_G[v]=N_G(v)\cup\{v\}$. The \textit{degree} of $v$ is $d(v)=|N_G(v)|$.
A vertex $v$ is called an \textit{isolated vertex} if $d(v)=0$.
If $d(v)=1$, then $v$ is called a \textit{leaf}.
For $S\subseteq V(G)$, let
$N_G(S)=\bigcup_{v\in S}N_G(v)$ and $N_G[S]=N_G(S)\cup S$.
Note that $N_G(S)$ may contain vertices of $S$.
We omit the subscript $G$ when the graph is clear from the context.

For any $S\subseteq V(G)$, let $G[S]$ denote the subgraph induced by $S$,
and write $G-S=G[V(G)\setminus S]$. We abbreviate $G-\{v\}$ to $G-v$.
A set $S$ is \textit{independent} if $G[S]$ is edgeless.
For $E'\subseteq E(G)$,
let $G[E']$ denote the edge subgraph with edge set $E'$
and vertex set consisting of the endvertices of edges in $E'$.
The \textit{path}, \textit{cycle}, and \textit{star} of order
$n$ are denoted by $P_n$, $C_n$, and $K_{1,n-1}$, respectively; $2K_2$
denotes the disjoint union of two edges. A \textit{forest} is a graph containing no cycle, and a \textit{tree} is a connected forest.
The \textit{distance} $d(u,v)$ between two vertices $u$ and $v$ in a graph $G$ is the length of a shortest path in $G$ which connects $u$ and $v$,
and $d(u,v)=\infty$ when $u$ and $v$
belong to distinct components.

For any vertex $v$ in a graph $G$,
we say that $v$ \textit{$ve$-dominates}
every edge
in $G$ which is incident with
some vertex in $N[v]$.
A subset $S\subseteq V(G)$ is a \textit{vertex-edge dominating set}
of $G$, abbreviated \textit{$ve$-dominating set}, if every edge $e$ in $G$ is $ve$-dominated by some vertex $v$ in $S$.
The \textit{vertex-edge domination number}, abbreviated \textit{$ve$-domination number}, of $G$ is the minimum cardinality of a $ve$-dominating set of $G$, denoted by $\gamma_{ve}(G)$.
We call a $ve$-dominating set of cardinality
$\gamma_{ve}(G)$ a \emph{$\gamma_{ve}(G)$-set}. Equivalently, $S$ is a
$ve$-dominating set if and only if $G-N[S]$ is edgeless. Thus,
$\gamma_{ve}(G)$ equals the isolation number $\iota(G)$ studied by Caro
and Hansberg~\cite{CH2017}. When $G$ is edgeless or $G$ has no vertices, $\gamma_{ve}(G)=0$. The parameter is
additive over components, since each component must be $ve$-dominated
independently. In particular, removing an isolated vertex does not
change $\gamma_{ve}(G)$.

Vertex-edge domination was introduced by Peters~\cite{Peters1986}
and further studied by Lewis~\cite{Lewis2007} and Lewis et al.~\cite{Lewis2010}.
The same parameter was investigated by Caro and
Hansberg~\cite{CH2017} under the name of the isolation number.
Bounds and structural properties of vertex-edge domination
have been established for general graphs and trees
\cite{Boutrig2016,Klostermeyer2021,Krishnakumari2014,Zylinski2019}.
Algorithmic aspects have also been studied for various graph
classes, including cubic graphs and unit disk graphs
\cite{Ziemann2020,Jena2022,Paul2022}.

The structure of minimum $ve$-dominating sets and the effects of
graph modifications have also been studied.
Senthilkumar, Chellali, Naresh Kumar and Venkatakrishnan~\cite{Senthilkumar2023}
studied graphs with a unique minimum
$ve$-dominating set and gave characterizations of the trees with this
property. More recently, Bartolo, Borg, Dettlaff, Lema\'nska and \.Zyli\'nski~\cite{Bartolo2026} investigated
isolation criticality under multiple edge subdivision. In a related direction,
Thakkar and Jamvecha~\cite[Section~1, p.~245]{Thakkar2017} studied vertex removal under a different stated definition: $v$ dominates $xy$ when $\{x,y\}\subseteq N[v]$. Our definition requires only $\{x,y\}\cap N[v]\ne\emptyset$.
In this paper, we study the effect of vertex removal on
$\gamma_{ve}(G)$ by examining the structure of $\gamma_{ve}(G)$-sets.

Removing a vertex may decrease, preserve, or increase
the $ve$-domination number. Accordingly, we partition $V(G)$
into the following three classes:
$$
\begin{aligned}
V^-(G)&=\{v\in V(G):\gamma_{ve}(G-v)<\gamma_{ve}(G)\},\\
V^0(G)&=\{v\in V(G):\gamma_{ve}(G-v)=\gamma_{ve}(G)\},\\
V^+(G)&=\{v\in V(G):\gamma_{ve}(G-v)>\gamma_{ve}(G)\}.
\end{aligned}
$$
Let $\mathcal{CVR}_{ve}$ denote the class of \textit{vertex-edge domination vertex-critical graphs}, or $\gamma_{ve}$-vertex-critical graphs for short, consisting of all graphs $G$ satisfying $\gamma_{ve}(G)\neq \gamma_{ve}(G-v)$ for every $v\in V(G)$.
Thus $G\in \mathcal{CVR}_{ve}$ if and only if $V^0(G)=\emptyset$.

The vertices of $G$ can also be classified in terms of $\gamma_{ve}(G)$-sets as follows.
Let $\mathcal{S}(G)$ denote the family of all $\gamma_{ve}(G)$-sets,
and let $\mathcal{S}(v,G)=\{S\in\mathcal{S}(G):v\in S\}$.
A vertex $v$ is \textit{$\gamma_{ve}(G)$-good}
if $v\in S$ for some $\gamma_{ve}(G)$-set $S$,
and \textit{$\gamma_{ve}(G)$-bad}
if $v\notin S$ for every $\gamma_{ve}(G)$-set $S$.
It is \textit{$\gamma_{ve}(G)$-fixed} if $v\in S$ for every $\gamma_{ve}(G)$-set $S$, and
\textit{$\gamma_{ve}(G)$-free} if $v$ is contained in some $\gamma_{ve}(G)$-set but not in all $\gamma_{ve}(G)$-sets.
The corresponding sets are denoted by $\good{G}, \bad{G}, \fix{G}, V_{\mathrm{free}}(G)$, respectively.
In particular, the fixed, free, and bad vertices form a partition of $V(G)$.
We further define

$$
\begin{aligned}
\free{G}&=V_{\mathrm{free}}(G)\cap V^0(G),\\
\Free{G}&=V_{\mathrm{free}}(G)\cap V^-(G),\\
\fixk{G}&=\{v\in \fix{G}:
                  \gamma_{ve}(G-v)=\gamma_{ve}(G)+k\}
                  \qquad (k\geq0),
\end{aligned}
$$
where $k$ is an integer. In particular,
$V_{\mathrm{fix}}^{0}(G)=V_{\mathrm{fix}}(G)\cap V^0(G)$.

By associating pairwise disjoint sets of private-edge
endvertices with fixed vertices (Theorem~\ref{thm7}), we obtain
the following weighted bound (Corollary~\ref{V0Vplus}):
$$
|V^0(G)|\geq 3|V_{\mathrm{fix}}^{0}(G)|+
\sum_{v\in V^+(G)}
\bigl(\gamma_{ve}(G-v)-\gamma_{ve}(G)+1\bigr).
$$
We also prove that every vertex in $V_{\mathrm{fix}}^{0}(G)$ lies on
a cycle (Theorem~\ref{lemF}) and that $V^0(T)\neq\emptyset$
for every tree $T$ of order at least three
(Theorem~\ref{thm2}).

Section~2 presents the main results and their proofs.
Section~3 discusses consequences for vertex-removal critical
graphs and questions for further study.
\section{Main results}
In this section, we state
the main results on the effect of vertex removal on the $ve$-domination number. We begin with sharp bounds on the $ve$-domination number after the removal of a vertex.

\begin{thm}\label{thm9}
Let $G$ be a graph and $v\in V(G)$ be a non-isolated vertex.
Then
\equ{eq1}
{
	\gamma_{ve}(G)-1\leq
\gamma_{ve}(G-v)\leq\gamma_{ve}(G)
+d(v)-1.
}
The lower bound also holds for isolated vertices.
Moreover, the bounds are sharp.
\end{thm}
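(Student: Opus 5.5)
The plan is to prove the two inequalities separately by transferring $ve$-dominating sets between $G$ and $G-v$, and then to exhibit small examples showing both bounds are attained.

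\emph{Lower bound.} Let $S'$ be a $\gamma_{ve}(G-v)$-set and put $S=S'\cup\{v\}$. Take any edge $e$ of $G$. If $e$ is incident with $v$, then $v\in S$ $ve$-dominates $e$. Otherwise $e$ is an edge of $G-v$, so some $u\in S'$ has an endvertex of $e$ in $N_{G-v}[u]\subseteq N_G[u]$, and $u$ $ve$-dominates $e$ in $G$. Hence $S$ is a $ve$-dominating set of $G$, and
\[
\gamma_{ve}(G)\le \gamma_{ve}(G-v)+1 .
\]
This argument never uses $d(v)\ge 1$, so the lower bound also holds for isolated $v$. (In that case the statement is trivial anyway, since removing an isolated vertex leaves $\gamma_{ve}$ unchanged.)

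\emph{Upper bound.} Let $S$ be a $\gamma_{ve}(G)$-set and write $N(v)=\{u_1,\dots,u_d\}$ with $d=d(v)\ge 1$.
\begin{itemize}
\item If $v\notin S$, then $S$ is a $ve$-dominating set of $G-v$. Indeed, an edge $xy$ of $G-v$ is $ve$-dominated in $G$ by some $w\in S$, with, say, $x\in N_G[w]$. Since $x\ne v$ and $w\ne v$, either $x=w$ or $xw$ is an edge of $G-v$. So $\gamma_{ve}(G-v)\le \gamma_{ve}(G)\le \gamma_{ve}(G)+d-1$.
\item If $v\in S$, the edges of $G-v$ that may lose their dominator are exactly those incident with some $u_i$, since the edges incident with $v$ itself disappear. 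Take $S'=(S\setminus\{v\})\cup\{u_1,\dots,u_d\}$. Each such edge has an endvertex $u_i\in S'$, so it is dominated. Every other edge of $G-v$ is dominated by some $w\in S\setminus\{v\}$, exactly as in the first case. This gives $\gamma_{ve}(G-v)\le \gamma_{ve}(G)-1+d$.
\end{itemize}
Note that this bound uses all $d$ neighbours; we do not need to save one.

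\emph{Sharpness.}
\begin{itemize}
\item For the lower bound, take $G=P_2$ and let $v$ be a leaf. Then $\gamma_{ve}(G)=1$ and $\gamma_{ve}(G-v)=0$.
\item For the upper bound with $d=1$, equality means $\gamma_{ve}(G-v)=\gamma_{ve}(G)$, which any leaf of $P_3$ gives.
\item For the upper bound in general, take $d$ disjoint copies of a graph $H$ with $\gamma_{ve}(H)=1$ and add a new vertex $v$ adjacent to one suitable vertex $x_i$ of each copy, so that $\{v\}$ alone $ve$-dominates everything. A natural choice is $H=P_5$ with $x_i$ its centre; here $v$ dominates all edges at the $x_i$ and their neighbours. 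We need every copy after removal to still require one vertex, while $G$ needs just $\{v\}$. Then $\gamma_{ve}(G-v)=d=\gamma_{ve}(G)+d-1$.
\end{itemize}

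The main obstacle is the sharpness construction for the upper bound. We must check that $v$ really $ve$-dominates each copy, which requires every edge of the copy to touch $N[x_i]$; this holds for $P_5$ with $x_i$ its centre. We must also ensure the copies do not become cheaper in $G$ than $\{v\}$; this is fine since $\gamma_{ve}(G)\ge 1$. If $P_5$ misbehaves, I would fall back to attaching $v$ to the centre of a star $K_{1,2}$ in each copy, i.e.\ a spider whose legs have length $2$ from $v$.
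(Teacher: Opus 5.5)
Your proofs of the two inequalities are the same as the paper's: add $v$ to a $\gamma_{ve}(G-v)$-set, and replace $v$ by $N(v)$ in a $\gamma_{ve}(G)$-set containing $v$. Your lower-bound example $P_2$ is the case $k=1$ of the paper's $K_{1,k}$. The gap is in the sharpness example for the upper bound, which rests on a misreading of $ve$-domination. A vertex $v$ $ve$-dominates exactly the edges having an endvertex in $N[v]$. In your graph $N_G[v]=\{v,x_1,\dots,x_d\}$, so $\{v\}$ covers a copy of $H$ only if every edge of that copy is incident with $x_i$ itself. Your criterion, that every edge touches $N[x_i]$, is domination by $x_i$, not by $v$. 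For $H=P_5=a_ib_ix_id_ie_i$, the end edges $a_ib_i$ and $d_ie_i$ have no endvertex in $N_G[v]$, so $\{v\}$ is not $ve$-dominating. In fact every $ve$-dominating set must meet each of the pairwise disjoint sets $N[a_i]\cup N[b_i]=\{a_i,b_i,x_i\}$, and $\{x_1,\dots,x_d\}$ works. Hence $\gamma_{ve}(G)=d=\gamma_{ve}(G-v)$, which is strictly below $\gamma_{ve}(G)+d-1=2d-1$ for every $d\ge 2$. The construction therefore fails exactly in the range where it is needed.

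The correct requirement is that each copy be a star centred at $x_i$, i.e.\ $H=K_{1,m}$ ($m\ge1$) with $x_i$ its centre. Then $\{v\}$ $ve$-dominates $G$, while $G-v\cong dK_{1,m}$ has $\gamma_{ve}(G-v)=d=\gamma_{ve}(G)+d(v)-1$.
\begin{itemize}
\item Taking $m=1$ gives the paper's example: $K_{1,r}$ with every edge subdivided once, so that $G-v\cong rK_2$. This is also what ``a spider whose legs have length $2$'' describes.
\item Taking $m=2$ is your $K_{1,2}$ fallback, which does work.
\end{itemize}
However, you offered the fallback only ``if $P_5$ misbehaves''. The check you stated is satisfied by $P_5$, so it would never have triggered. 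Replace the $P_5$ construction by the star construction, and verify it by testing each edge against $N_G[v]$.
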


\begin{proof}
Let $S$ be a $\gamma_{ve}(G-v)$-set.
Then $S\cup \{v\}$ is a $ve$-dominating set of $G$.
Hence
$\gamma_{ve}(G)\leq \gamma_{ve}(G-v)+1$, that is, $\gamma_{ve}(G-v)\geq \gamma_{ve}(G)-1$.

Now we prove that $\gamma_{ve}(G-v)\leq\gamma_{ve}(G)+d(v)-1$.
Let $S'$ be a $\gamma_{ve}(G)$-set.
If $v\in S'$, then $(S'\backslash \{v\})\cup N(v)$ is a $ve$-dominating set of $G-v$, since every edge of $G-v$ $ve$-dominated by $v$ is incident with a vertex of $N(v)$.
Therefore, we have $\gamma_{ve}(G-v)\leq\gamma_{ve}(G)+d(v)-1$.
If $v\notin S'$, then $S'$ is also a $ve$-dominating set of $G-v$, so $\gamma_{ve}(G-v)\leq\gamma_{ve}(G)$.
Hence, we have $\gamma_{ve}(G-v)\leq\gamma_{ve}(G)+d(v)-1$,
where the last inequality follows from $d(v)\geq1$.

For sharpness of the lower bound, let $G= K_{1,k}$,
where $k\geq1$, and let $v$ be its center.
Then $\gamma_{\mathrm{ve}}(G)=1$ and
$\gamma_{\mathrm{ve}}(G-v)=0$.
For sharpness of the upper bound,
let $G$ be obtained from $K_{1,r}$~($r\geq1$)
by subdividing each edge once, and let $v$ be the original center.
Then $\{v\}$ $ve$-dominates $G$, while $G-v\cong r K_2$. Hence
$
\gamma_{ve}(G-v)=r=\gamma_{ve}(G)+d(v)-1.
$
\end{proof}

Theorem~\ref{thm9} shows that removing a non-isolated vertex $v$
decreases the $ve$-domination number by at most one, while it may
increase the $ve$-domination number by as much as $d(v)-1$.
Consequently, we have the following.

\begin{cor}\label{cor1}
Let $G$ be a graph and $v\in V(G)$.
If $v\in V^{-}(G)$, then  $\gamma_{ve}(G-v)=\gamma_{ve}(G)-1$.
If $v\in V^{+}(G)$, then  $\gamma_{ve}(G-v)=\gamma_{ve}(G)+r$
for some integer $r$ with $1\le r\le d(v)-1$.
\end{cor}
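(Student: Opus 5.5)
The plan is to read the corollary off Theorem~\ref{thm9}, using only that $\gamma_{ve}$ takes integer values. I will treat the two cases $v\in V^-(G)$ and $v\in V^+(G)$ separately. In each case I first check whether $v$ can be isolated, since the upper bound in Theorem~\ref{thm9} is stated only for non-isolated vertices.

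Suppose first that $v\in V^-(G)$, so $\gamma_{ve}(G-v)<\gamma_{ve}(G)$. The lower bound of Theorem~\ref{thm9} holds for every vertex, isolated or not, and gives $\gamma_{ve}(G-v)\ge\gamma_{ve}(G)-1$. Since both quantities are integers, $\gamma_{ve}(G-v)\le\gamma_{ve}(G)-1$, and the two inequalities give $\gamma_{ve}(G-v)=\gamma_{ve}(G)-1$.

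Now suppose $v\in V^+(G)$. The first step is to rule out $v$ being isolated. The paper notes that $\gamma_{ve}$ is additive over components and that removing an isolated vertex leaves $\gamma_{ve}$ unchanged, so an isolated $v$ would lie in $V^0(G)$. Hence $d(v)\ge1$, and the upper bound of Theorem~\ref{thm9} applies. Set $r=\gamma_{ve}(G-v)-\gamma_{ve}(G)$. This is an integer, and $r\ge1$ because $v\in V^+(G)$. The upper bound gives $r\le d(v)-1$. As a consistency check, these bounds force $d(v)\ge2$, so no leaf lies in $V^+(G)$.

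There is no real obstacle here. The only point needing care is the isolated-vertex case for $V^+(G)$, which the additivity remark settles.
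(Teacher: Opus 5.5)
Your proposal is correct and matches the paper's approach: the paper also reads the corollary directly off the two bounds of Theorem~\ref{thm9}, using that $\gamma_{ve}$ is integer-valued. Your explicit check that an isolated vertex lies in $V^0(G)$, which is needed so that the upper bound applies to $v\in V^+(G)$, fills in a detail the paper leaves implicit.
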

\subsection{On vertices in $V^-(G)$}
In this subsection, we characterize the vertices in $V^{-}(G)$
in terms of private edges of minimum $ve$-dominating sets.

Recall that, for $S\subseteq V(G)$ and $v\in S$,
an edge $e\in E(G)$ is a \textit{private edge} of $v$
with respect to $S$ if $v$ $ve$-dominates $e$
and no vertex of $S\setminus\{v\}$ does.
The set of all such edges is denoted by $pe_{G}[v,S]$.

\begin{obs}\label{obss1}
Let $G$ be a graph and $v\in V^{-}(G)$.
Then for every $S\in \mathcal{S}(G-v)$ and every $u\in N_{G}[v]$, $S\cup\{u\}\in \mathcal{S}(G)$.
\end{obs}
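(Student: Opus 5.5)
The plan is to show directly that $S\cup\{u\}$ is a $ve$-dominating set of $G$, and then read off its cardinality from Corollary~\ref{cor1}.

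Fix $v\in V^-(G)$, a set $S\in\mathcal{S}(G-v)$, and a vertex $u\in N_G[v]$. By Corollary~\ref{cor1}, $\gamma_{ve}(G-v)=\gamma_{ve}(G)-1$, so $|S|=\gamma_{ve}(G)-1$.

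We first check domination. Every edge of $G$ either lies in $G-v$ or is incident with $v$.
\begin{itemize}
\item An edge $xy$ of $G-v$ is $ve$-dominated in $G-v$ by some $w\in S$, meaning $\{x,y\}\cap N_{G-v}[w]\neq\emptyset$. Since $N_{G-v}[w]\subseteq N_G[w]$, the same $w$ still $ve$-dominates $xy$ in $G$.
\item An edge incident with $v$ is $ve$-dominated by $u$, because $v\in N_G[u]$. This holds both when $u=v$ and when $u$ is a neighbor of $v$.
\end{itemize}
Hence $S\cup\{u\}$ is a $ve$-dominating set of $G$.

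Next we compare sizes. Since $S\cup\{u\}$ is $ve$-dominating, $\gamma_{ve}(G)\le |S\cup\{u\}|\le |S|+1=\gamma_{ve}(G)$. Equality must hold throughout. So $u\notin S$, and $|S\cup\{u\}|=\gamma_{ve}(G)$, which means $S\cup\{u\}\in\mathcal{S}(G)$.

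The only step needing care is the second bullet, namely that every edge incident with $v$ is covered by $u$ for each choice $u\in N_G[v]$. This follows directly from the definition, since $v$ lies in $N_G[u]$. Everything else is immediate from Corollary~\ref{cor1} and the fact that closed neighborhoods only grow when passing from $G-v$ to $G$.
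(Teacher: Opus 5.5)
Your proposal is correct and follows essentially the same argument as the paper. You use Corollary~\ref{cor1} to get $|S|=\gamma_{ve}(G)-1$, show that $S$ covers $E(G-v)$ while $u$ covers every edge at $v$, and conclude with the sandwich $\gamma_{ve}(G)\le|S\cup\{u\}|\le|S|+1=\gamma_{ve}(G)$, adding only the minor explicit remarks that $N_{G-v}[w]\subseteq N_G[w]$ and $u\notin S$.
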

\begin{proof}
Let $S\in\mathcal S(G-v)$ and $u\in N_G[v]$.
By Corollary~\ref{cor1}, $|S|=\gamma_{ve}(G)-1$.
The set $S$ $ve$-dominates every edge of $G-v$, and $u$
$ve$-dominates every edge incident with $v$.
Thus $S\cup\{u\}$ is a $ve$-dominating set of $G$, and
$$\gamma_{ve}(G)\le |S\cup\{u\}|\le |S|+1=\gamma_{ve}(G).$$
Hence $S\cup\{u\}\in\mathcal S(G)$.
\end{proof}
The following characterization is the analogue of
\cite[Theorem~6]{Thakkar2017}
for the definition used here.
The proof follows the same private-edge argument.
\begin{thm}\label{thm1}
Let $G$ be a graph and $v\in V(G)$. Then $v\in V^{-}(G)$ if and only if there exists a $\gamma_{ve}(G)$-set $S$
with $v\in S$ such that
$\emptyset \ne pe_{G}[v,S]\subseteq\{uv: u\in N_{G}(v)\}$.
\end{thm}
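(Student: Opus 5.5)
I will prove the two directions separately, relying on Observation~\ref{obss1} for the forward direction and on a direct construction for the converse.

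\emph{Forward direction.} Suppose $v\in V^-(G)$ and let $S'\in\mathcal S(G-v)$. By Observation~\ref{obss1} with $u=v$, the set $S=S'\cup\{v\}$ is a $\gamma_{ve}(G)$-set. Moreover $v\notin S'$, since $S'\subseteq V(G-v)$.

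First, every edge not incident with $v$ lies in $G-v$. Such an edge is $ve$-dominated in $G-v$ by some $w\in S'=S\setminus\{v\}$, and $N_{G-v}[w]\subseteq N_G[w]$, so $w$ also $ve$-dominates it in $G$. Hence no such edge is private to $v$, which gives $pe_G[v,S]\subseteq\{uv:u\in N_G(v)\}$.

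Second, $pe_G[v,S]\neq\emptyset$. Otherwise $S\setminus\{v\}$ would $ve$-dominate every edge of $G$, contradicting $|S\setminus\{v\}|<\gamma_{ve}(G)$ by minimality. Here we need $\gamma_{ve}(G)\ge1$, which holds because $\gamma_{ve}(G-v)<\gamma_{ve}(G)$ forces $\gamma_{ve}(G)\ge 1$.

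\emph{Converse.} Let $S$ be a $\gamma_{ve}(G)$-set with $v\in S$ and $\emptyset\neq pe_G[v,S]\subseteq\{uv:u\in N_G(v)\}$. Set $S'=S\setminus\{v\}\subseteq V(G-v)$. I claim $S'$ is a $ve$-dominating set of $G-v$.

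Take an edge $e=xy$ of $G-v$. Since $e$ is not incident with $v$, it is not a private edge of $v$. As $S$ $ve$-dominates $G$, some $w\in S\setminus\{v\}$ $ve$-dominates $e$ in $G$, so $\{x,y\}\cap N_G[w]\neq\emptyset$, say $x\in N_G[w]$. Since $x\neq v$ and $w\neq v$, we get $x\in N_{G-v}[w]$. Thus $w$ $ve$-dominates $e$ in $G-v$.

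Therefore $\gamma_{ve}(G-v)\le|S|-1<\gamma_{ve}(G)$, so $v\in V^-(G)$.

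The converse does not actually use $pe_G[v,S]\neq\emptyset$; this condition only matters in the forward direction. The one step needing care is the transfer of $ve$-domination between $G$ and $G-v$. It relies on the fact that the dominated endpoint $x$ and the dominator $w$ both differ from $v$, so the neighborhood relation is unaffected by deleting $v$.
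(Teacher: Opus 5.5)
Your proof is correct and follows essentially the same route as the paper. In the forward direction you take $S=S'\cup\{v\}$ for a $\gamma_{ve}(G-v)$-set $S'$ via Observation~\ref{obss1}, and in the converse you show that $S\setminus\{v\}$ $ve$-dominates $G-v$; your extra care about transferring $ve$-domination between $G$ and $G-v$, and your remark that the converse does not use $pe_G[v,S]\neq\emptyset$, are accurate refinements of the paper's argument.
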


\begin{proof}
Suppose first that $v\in V^{-}(G)$, and let $S'$ be a $\gamma_{ve}(G-v)$-set. By Observation \ref{obss1}, $S=S'\cup \{v\}$ is a $\gamma_{ve}(G)$-set.
If $pe_{G}[v,S]=\emptyset$, then $S'$ is a $ve$-dominating set of $G$,  contradicting the assumption that $v\in V^{-}(G)$.
Thus, $pe_{G}[v,S]\neq\emptyset$.
Now we prove $pe_{G}[v,S]\subseteq\{uv: u\in N_{G}(v)\}$. If there exists some edge $e\in pe_{G}[v,S]$ that is not incident with $v$,
then $e\in E(G-v)$
and it is not $ve$-dominated by any vertex in $S'$.
This contradicts the assumption that $S'$ is a $\gamma_{ve}$-set of $G-v$. Hence, $pe_{G}[v,S]\subseteq\{uv: u\in N_{G}(v)\}$.

Conversely, suppose that there exists a $\gamma_{ve}(G)$-set $S$
with $v\in S$ such that $pe_{G}[v,S]\neq\emptyset$ and $pe_{G}[v,S]\subseteq\{uv: u\in N_{G}(v)\}$.
Then $pe_{G}[v,S]\cap E(G-v)=\emptyset$, implying that $S\backslash\{v\}$ is a $ve$-dominating set of $G-v$.
Thus, $\gamma_{ve}(G-v)\le |S|-1=\gamma_{ve}(G)-1$.
Hence $v\in V^{-}(G)$.
\end{proof}

The following result is a consequence of Theorem \ref{thm1}.

\begin{cor}\label{corcvr}
Let $G$ be a graph. Then $V(G)=V^{-}(G)$ if and only if for every $v\in V(G)$, there exists a $\gamma_{ve}(G)$-set $S$ such that $v\in S$ and $\emptyset \ne pe_{G}[v,S]\subseteq\{uv: u\in N_{G}(v)\}$.
\end{cor}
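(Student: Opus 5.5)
The statement follows directly from Theorem~\ref{thm1}, applied vertex by vertex. By definition, $V(G)=V^{-}(G)$ means that every vertex $v\in V(G)$ lies in $V^{-}(G)$. So the plan is to quantify Theorem~\ref{thm1} over all $v$.

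For the forward direction, assume $V(G)=V^{-}(G)$ and fix an arbitrary $v\in V(G)$. Then $v\in V^{-}(G)$. The ``only if'' part of Theorem~\ref{thm1} supplies a $\gamma_{ve}(G)$-set $S$ with $v\in S$ and $\emptyset\ne pe_G[v,S]\subseteq\{uv:u\in N_G(v)\}$. Since $v$ was arbitrary, this holds for every vertex.

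For the converse, assume that for every $v\in V(G)$ such a set $S$ exists. The ``if'' part of Theorem~\ref{thm1} then gives $v\in V^{-}(G)$ for every $v$, so $V(G)\subseteq V^{-}(G)$. The reverse inclusion $V^{-}(G)\subseteq V(G)$ holds by definition, hence $V(G)=V^{-}(G)$.

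There is no real obstacle here; both directions are immediate from Theorem~\ref{thm1}. One point to keep in mind is the degenerate case $V(G)=\emptyset$, where both sides of the equivalence hold vacuously. Also, isolated vertices need no separate treatment: such a vertex has $pe_G[v,S]=\emptyset$ for every $S$, and it also fails to be in $V^{-}(G)$, so it is handled consistently by the pointwise application of Theorem~\ref{thm1}.
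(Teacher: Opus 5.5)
Your proposal is correct and matches the paper, which states this corollary as an immediate consequence of Theorem~\ref{thm1} without further proof. Applying that equivalence to each vertex $v\in V(G)$ is exactly the intended argument, and your remarks on the empty graph and on isolated vertices are accurate but not needed.
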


Next, we present several properties of $v\in V^{-}(G)$.

\begin{pro}\label{pro1}
Let $G$ be a graph. Then the following statements hold:

(1) $V^{-}(G)= \Free{G}$;

(2) For every $v\in V^{-}(G)$, $\bad{G}\cup N_{G}(v)\subseteq \bad{G-v}$;

(3) For every $v\in V^{-}(G)$, $\fix{G}\subseteq \fix{G-v}.$
\end{pro}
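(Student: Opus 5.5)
We prove the three parts in order. Throughout, fix $v\in V^{-}(G)$, so that $\gamma_{ve}(G-v)=\gamma_{ve}(G)-1$ by Corollary~\ref{cor1}. The main tool is Observation~\ref{obss1}: for every $S'\in\mathcal S(G-v)$ and every $u\in N_G[v]$, the set $S'\cup\{u\}$ is a $\gamma_{ve}(G)$-set.

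For (1), we only need $V^{-}(G)\subseteq V_{\mathrm{free}}(G)$, since $\Free{G}=V_{\mathrm{free}}(G)\cap V^-(G)$ gives the reverse inclusion.
\begin{itemize}
\item $v$ is good: Theorem~\ref{thm1} provides a $\gamma_{ve}(G)$-set containing $v$.
\item $v$ is not fixed: $v$ is not isolated, since removing an isolated vertex does not change $\gamma_{ve}$. So pick $u\in N_G(v)$ and $S'\in\mathcal S(G-v)$. Then $S'\cup\{u\}\in\mathcal S(G)$ by Observation~\ref{obss1}, and it avoids $v$ because $S'\subseteq V(G-v)$.
\end{itemize}
Hence $v$ is free.

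For (2), we argue by contraposition. Suppose $x\in V(G-v)$ is good in $G-v$, witnessed by $S'\in\mathcal S(G-v)$ with $x\in S'$.
\begin{itemize}
\item Taking $u=v$ in Observation~\ref{obss1}, the set $S'\cup\{v\}$ is a $\gamma_{ve}(G)$-set containing $x$. So $x$ is good in $G$, which shows $\bad{G}\subseteq\bad{G-v}$.
\item For $N_G(v)$, take $x=w\in N_G(v)$ with $w\in S'$. Since $S'$ contains $w$ and $w$ $ve$-dominates every edge at $v$, the set $S'$ itself is a $ve$-dominating set of $G$. Then $\gamma_{ve}(G)\le|S'|=\gamma_{ve}(G)-1$, a contradiction. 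So no neighbor of $v$ is good in $G-v$, i.e.\ $N_G(v)\subseteq\bad{G-v}$.
\end{itemize}

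For (3), let $x\in\fix{G}$. By (1), $v$ is free, so $x\neq v$ and $x\in V(G-v)$. For any $S'\in\mathcal S(G-v)$, pick $u\in N_G(v)$. Then $S'\cup\{u\}\in\mathcal S(G)$, so it contains $x$. We need $x\neq u$. This holds because, by the contradiction argument in (2), a fixed vertex of $G$ cannot lie in $N_G(v)$: the set $S'\cup\{v\}$ is a $\gamma_{ve}(G)$-set, so it contains $x$, which forces $x\in S'$ whenever $x\neq v$.

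That last observation is in fact the cleanest route: using $S'\cup\{v\}$, which is a $\gamma_{ve}(G)$-set and hence contains $x$, we get $x\in S'$ directly, since $x\ne v$. Thus $x$ lies in every $\gamma_{ve}(G-v)$-set, i.e.\ $x\in\fix{G-v}$. The only delicate point in the whole proof is the first part of (1): it relies on the existence of a $\gamma_{ve}(G)$-set containing $v$, which comes from Theorem~\ref{thm1}.
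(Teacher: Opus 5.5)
Your proof is correct and follows the paper's argument essentially step for step: $S'\cup\{u\}$ with $u\in N_G(v)$ shows $v$ is not fixed (goodness comes from $S'\cup\{v\}$, or equivalently Theorem~\ref{thm1}); the neighbor-of-$v$ contradiction together with the lift $S'\cup\{v\}$ gives (2); and $S'\cup\{v\}$ gives (3). The detour in (3) through $S'\cup\{u\}$, and the claim that fixed vertices avoid $N_G(v)$, is valid but unnecessary, as you note yourself; the direct route via $S'\cup\{v\}$ is exactly the paper's.
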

\begin{proof}
(1) Since $\Free{G}=V_{\mathrm{free}}(G)\cap V^{-}(G)$, $\Free{G}\subseteq V^{-}(G)$. Conversely,
let $v\in V^{-}(G)$ and $S'$ be a $\gamma_{ve}(G-v)$-set.
Since isolated vertices belong to $V^0(G)$, we have $N_G(v)\neq\emptyset$.
Then by Observation \ref{obss1}, both $S'\cup \{v\}$ and $S'\cup \{u\}$ are $\gamma_{ve}(G)$-sets, where $u\in N_{G}(v)$.
Thus, $v$ is $\gamma_{ve}(G)$-free. Hence $V^{-}(G)\subseteq \Free{G}$.

(2)
For any $u\in N_{G}(v)$,
suppose that $u\notin \bad{G-v}$. Then there exists a $\gamma_{ve}(G-v)$-set $S$ containing $u$. Since $u$ $ve$-dominates every edge incident with $v$, $S$ is also a $ve$-dominating set of $G$. Hence
$\gamma_{ve}(G)\leq |S|=\gamma_{ve}(G)-1$,
a contradiction. Thus $N_{G}(v)\subseteq \bad{G-v}$.

Let $w\in \bad{G}$. If there is a set $S\in \mathcal{S}(w,G-v)$, then $S\cup\{v\}\in \mathcal{S}(w,G)$. This contradicts $w\in \bad{G}$.
Thus, we have $w\in \bad{G-v}$. Hence $\bad{G}\cup N_{G}(v)\subseteq  \bad{G-v}$.

(3) By (1), $v\notin \fix{G}$. Let $v'\in \fix{G}$ and $S'$ be any $\gamma_{ve}(G-v)$-set. By Observation \ref{obss1}, $S'\cup \{v\}$ is a $\gamma_{ve}(G)$-set.
Since $v'\neq v$ and $v'\in \fix{G}$, we have $v'\in S'$. Thus $v'\in \fix{G-v}$, and hence $\fix{G}\subseteq \fix{G-v}$.
\end{proof}
\subsection{On vertices in $V^+(G)$}
In this subsection, we characterize the
vertices $v\in V^+(G)$.
The following observation follows directly from the definition.

\begin{obs}\label{obss2}
Let $G$ be a graph and $S\in \mathcal{S}(G)$. Then for every vertex $v\in V(G)\backslash S$, $S$ is a $ve$-dominating set of $G-v$.
\end{obs}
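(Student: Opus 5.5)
The statement is essentially immediate from the definitions, so the plan is simply to unfold them. Fix $S\in\mathcal S(G)$ and a vertex $v\in V(G)\setminus S$. We must show that every edge of $G-v$ is $ve$-dominated, within $G-v$, by some vertex of $S$. Note that $S\subseteq V(G-v)$, since $v\notin S$, so $S$ is a legitimate candidate set in $G-v$.

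Take an edge $xy\in E(G-v)$. Then $xy\in E(G)$, so there is some $s\in S$ that $ve$-dominates $xy$ in $G$. That is, one endpoint, say $x$, lies in $N_G[s]$. Since $x\neq v$ and $s\neq v$, either $x=s$ or $xs$ is an edge of $G$ not incident with $v$. In either case $x\in N_{G-v}[s]$. Hence $s$ $ve$-dominates $xy$ in $G-v$, and so $S$ is a $ve$-dominating set of $G-v$.

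There is no real obstacle here. The only point to check is that deleting $v$ does not destroy the adjacency between $s$ and the dominated endpoint. This holds because the adjacency uses neither $v$ itself nor an edge incident with $v$, which follows from $s\neq v$ and from $xy$ surviving in $G-v$. As an immediate consequence, which is presumably how the observation will be used, $\gamma_{ve}(G-v)\le\gamma_{ve}(G)$ for every $v\notin S$. In particular, every vertex of $V^+(G)$ lies in every $\gamma_{ve}(G)$-set, that is, $V^+(G)\subseteq \fix{G}$.
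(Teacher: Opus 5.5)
Your proof is correct and is exactly the direct unfolding of the definitions that the paper intends; the paper gives no argument beyond saying the observation follows from the definition, and your checks that $s\neq v$ and $x\neq v$, so that $x\in N_{G-v}[s]$, are the right ones. Your argument never uses the minimality of $S$, so it in fact holds for every $ve$-dominating set of $G$ that avoids $v$.
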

\begin{pro}\label{prov+}
Let $G$ be a graph and $v\in V(G)$. If $v\in V^{+}(G)$, then $v\in \fix{G}$.
\end{pro}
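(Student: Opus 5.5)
We argue by contraposition, using Observation~\ref{obss2}. Suppose $v\notin \fix{G}$. By definition of a fixed vertex, there is then some $\gamma_{ve}(G)$-set $S$ with $v\notin S$. The aim is to show that $\gamma_{ve}(G-v)\le\gamma_{ve}(G)$, so that $v\notin V^{+}(G)$.

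By Observation~\ref{obss2}, $S$ is a $ve$-dominating set of $G-v$. The only point to check is that $S\subseteq V(G-v)$, which holds because $v\notin S$. The observation itself is justified as follows. Every edge of $G-v$ is an edge of $G$, so it is $ve$-dominated in $G$ by some $s\in S$, meaning one of its endvertices lies in $N_G[s]$. Both endvertices of such an edge differ from $v$, and $N_{G-v}[s]=N_G[s]\setminus\{v\}$. Hence the same endvertex lies in $N_{G-v}[s]$, and $s$ still $ve$-dominates the edge in $G-v$.

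It follows that $\gamma_{ve}(G-v)\le |S|=\gamma_{ve}(G)$, so $v\in V^{-}(G)\cup V^{0}(G)$, contradicting $v\in V^{+}(G)$. Therefore every vertex of $V^{+}(G)$ lies in every $\gamma_{ve}(G)$-set, that is, $v\in\fix{G}$.

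There is essentially no obstacle here. The only care needed is the closed-neighborhood argument above, namely that deleting a vertex outside $S$ cannot destroy $ve$-domination of the remaining edges. Isolated vertices need no separate treatment: they lie in $V^{0}(G)$, so the statement is vacuous for them.
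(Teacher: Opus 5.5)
Your proposal is correct and is essentially the paper's proof: both take a $\gamma_{ve}(G)$-set $S$ with $v\notin S$ and apply Observation~\ref{obss2} to get $\gamma_{ve}(G-v)\le |S|=\gamma_{ve}(G)$, contradicting $v\in V^{+}(G)$. Your only addition is a short justification of Observation~\ref{obss2} via $N_{G-v}[s]=N_G[s]\setminus\{v\}$, which the paper states without proof.
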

\begin{proof}
Let $v\in V^{+}(G)$.
Suppose that $v\notin \fix{G}$. Then there exists a $\gamma_{ve}(G)$-set $S$ with $v\notin S$.
By Observation \ref{obss2},
$S$ is also a $ve$-dominating set of $G-v$.
Thus, $\gamma_{ve}(G-v)\leq |S|=\gamma_{ve}(G)$. This contradicts $v\in V^{+}(G)$.
\end{proof}

By Proposition~\ref{prov+}, no free vertex belongs to $V^{+}(G)$.
Thus $V_{\mathrm{free}}(G)
=V_{\mathrm{free}}^{-}(G)\cup V_{\mathrm{free}}^{0}(G).$ 

To characterize the vertices in $V^{+}(G)$, we introduce the following exchange parameter.
Let $S$ be a $\gamma_{ve}(G)$-set containing the vertex $v$. For
$R\subseteq S\setminus\{v\}$, define
\[
\rho_S(v,R)=
\min\left\{
|X|:
X\subseteq V(G)\setminus S
\text{ and }
(S\setminus(R\cup\{v\}))\cup X \text{ $ve$-dominates } G-v
\right\}.
\]
We call $\rho_S(v,R)$ the \textit{exchange number} of $v$
with respect to $S$ and $R$.
Every vertex of $S$ has a neighbor outside $S$;
otherwise, it would have no private edge, contradicting
the minimality of $S$.
Thus $V(G)\setminus S$ dominates $G-v$, so
$X=V(G)\setminus S$ is feasible for every
$R\subseteq S\setminus\{v\}$. Consequently,
\[
  0\leq \rho_S(v,R)\leq |V(G)\setminus S|.
\]
\begin{lem}\label{lem:exchange}
Let $G$ be a graph, $v\in V(G)$, and $S$ be a
$\gamma_{ve}(G)$-set containing $v$. Then
\[
\gamma_{ve}(G-v)
=
\gamma_{ve}(G)-1+
\min_{R\subseteq S\setminus\{v\}}
\bigl(\rho_S(v,R)-|R|\bigr).
\]
\end{lem}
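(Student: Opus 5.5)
We prove the identity by establishing the two inequalities separately. Write $\gamma=\gamma_{ve}(G)$ and $\mu=\min_{R\subseteq S\setminus\{v\}}(\rho_S(v,R)-|R|)$.

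\emph{Upper bound.} Fix any $R\subseteq S\setminus\{v\}$ and let $X\subseteq V(G)\setminus S$ attain $\rho_S(v,R)$. By definition, $(S\setminus(R\cup\{v\}))\cup X$ $ve$-dominates $G-v$. Since $X$ is disjoint from $S$, this set has cardinality exactly
\[
|S|-1-|R|+|X|=\gamma-1+\rho_S(v,R)-|R|.
\]
Hence $\gamma_{ve}(G-v)\le\gamma-1+\rho_S(v,R)-|R|$. Minimizing over $R$ gives $\gamma_{ve}(G-v)\le\gamma-1+\mu$.

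\emph{Lower bound.} Let $D$ be a $\gamma_{ve}(G-v)$-set, so $D\subseteq V(G)\setminus\{v\}$. We encode $D$ relative to $S$ by setting
\[
R=(S\setminus\{v\})\setminus D, \qquad X=D\setminus S.
\]
Then $R\subseteq S\setminus\{v\}$ and $X\subseteq V(G)\setminus S$. Because $v\notin D$, we have $D\cap S=S\setminus(R\cup\{v\})$. It follows that
\[
D=(S\setminus(R\cup\{v\}))\cup X,
\]
and this set $ve$-dominates $G-v$. So $X$ is feasible in the definition of $\rho_S(v,R)$, giving $\rho_S(v,R)\le|X|$. Counting, $|D|=|S|-1-|R|+|X|\ge\gamma-1+\rho_S(v,R)-|R|\ge\gamma-1+\mu$. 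Combining the two inequalities yields the lemma.

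\emph{Points to check.} The main thing to verify is that the decomposition of $D$ is exact. It relies on two facts: $v\notin D$, and $X$ is disjoint from $S$, so the cardinality count has no overlap. One should also note that the minimum defining $\rho_S(v,R)$ is always attained. This holds because $V(G)\setminus S$ is feasible, as remarked before the lemma; if $v\in V(G)\setminus S$ is a concern, one can instead use $X=D\setminus S$ from the lower-bound argument, which always exists. No earlier result beyond these definitions is needed.
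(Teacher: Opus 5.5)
Your proposal is correct and follows the paper's proof essentially verbatim: the upper bound comes from an optimal $X$ for each $R$, and the lower bound from writing a $\gamma_{ve}(G-v)$-set $D$ as $(S\setminus(R\cup\{v\}))\cup X$ with $R=(S\setminus\{v\})\setminus D$ and $X=D\setminus S$. Your worry about $v\in V(G)\setminus S$ is moot, since $v\in S$ by hypothesis, so the remark before the lemma (that $X=V(G)\setminus S$ is always feasible) already guarantees $\rho_S(v,R)$ is finite for every $R$.
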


\begin{proof}
For any
$R\subseteq S\setminus\{v\}$, let $X\subseteq V(G)\setminus S$
be a set of cardinality $\rho_S(v,R)$ such that
$D=(S\setminus(R\cup\{v\}))\cup X$
is a $ve$-dominating set of $G-v$. Since $X\cap S=\emptyset$, we have
\[
|D|
=\gamma_{ve}(G)-1-|R|+\rho_S(v,R).
\]
Therefore,
\equ{eq4}
{
\gamma_{ve}(G-v)
\leq
\gamma_{ve}(G)-1+
\min_{R\subseteq S\setminus\{v\}}
\bigl(\rho_S(v,R)-|R|\bigr).
}
Conversely, let $D$ be a $\gamma_{ve}(G-v)$-set, and let
\[
R=(S\setminus\{v\})\setminus D
\quad\text{and}\quad
X=D\setminus S.
\]
Then
$R\subseteq S\setminus\{v\},$
$X\subseteq V(G)\setminus S,$
and
$D=(S\setminus(R\cup\{v\}))\cup X.$
Hence, by the definition of $\rho_S(v,R)$, $\rho_S(v,R)\leq |X|.$

Moreover,
\[
|D|
=\gamma_{ve}(G)-1-|R|+|X|
\geq
\gamma_{ve}(G)-1+\rho_S(v,R)-|R|.
\]
It follows that
\equ{eq5}
{
\gamma_{ve}(G-v)
\geq
\gamma_{ve}(G)-1+
\min_{R'\subseteq S\setminus\{v\}}
\bigl(\rho_S(v,R')-|R'|\bigr).
}

By~(\ref{eq4}) and~(\ref{eq5}), we have \[
\gamma_{ve}(G-v)
=
\gamma_{ve}(G)-1+
\min_{R\subseteq S\setminus\{v\}}
\bigl(\rho_S(v,R)-|R|\bigr).
\]
\end{proof}

\begin{thm}\label{thm3}
Let $G$ be a graph and $v\in V(G)$.
Then $v\in V^{+}(G)$ if and only if there exists a
$\gamma_{ve}(G)$-set $S$ containing $v$ such that
$\rho_S(v,R)\geq |R|+2$
for every $R\subseteq S\setminus\{v\}$.
\end{thm}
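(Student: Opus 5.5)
The plan is to derive both directions directly from Lemma~\ref{lem:exchange}, combined with Proposition~\ref{prov+}. The key observation is that the quantity in Lemma~\ref{lem:exchange} does not depend on which $\gamma_{ve}(G)$-set containing $v$ we choose. Whenever such a set $S$ exists, we have
\[
\gamma_{ve}(G-v)-\gamma_{ve}(G)=-1+\min_{R\subseteq S\setminus\{v\}}\bigl(\rho_S(v,R)-|R|\bigr).
\]
Hence $v\in V^{+}(G)$ holds exactly when this minimum is at least $2$, that is, when $\rho_S(v,R)\geq |R|+2$ for every $R\subseteq S\setminus\{v\}$.

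For the forward direction, suppose $v\in V^{+}(G)$. By Proposition~\ref{prov+}, $v\in\fix{G}$, so some (indeed every) $\gamma_{ve}(G)$-set $S$ contains $v$. Fix such an $S$. Lemma~\ref{lem:exchange} together with $\gamma_{ve}(G-v)\geq\gamma_{ve}(G)+1$ gives $\min_R(\rho_S(v,R)-|R|)\geq 2$. Therefore $\rho_S(v,R)\geq|R|+2$ for every $R\subseteq S\setminus\{v\}$.

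For the converse, suppose $S$ is a $\gamma_{ve}(G)$-set containing $v$ with $\rho_S(v,R)\geq |R|+2$ for all $R\subseteq S\setminus\{v\}$. Then the minimum in Lemma~\ref{lem:exchange} is at least $2$, so $\gamma_{ve}(G-v)\geq\gamma_{ve}(G)+1$, which means $v\in V^{+}(G)$.

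There is no real obstacle here; the only care needed is the well-definedness of $\rho_S(v,R)$. Its feasibility was noted before Lemma~\ref{lem:exchange}: $X=V(G)\setminus S$ always works, so the minimum is taken over a nonempty finite set. We also need the existence of a set $S$ containing $v$ in the forward direction, and this is exactly what Proposition~\ref{prov+} supplies.
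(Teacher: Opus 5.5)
Your proposal is correct and follows the same route as the paper's proof. Both directions come from Lemma~\ref{lem:exchange}, and Proposition~\ref{prov+} supplies a $\gamma_{ve}(G)$-set containing $v$ in the forward direction.
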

\begin{proof}
Suppose that $v\in V^{+}(G)$, and let $S$ be a $\gamma_{ve}(G)$-set.
By Proposition~\ref{prov+}, $v\in S$.
By Lemma~\ref{lem:exchange}, we have
\[
\min_{R\subseteq S\setminus\{v\}}
\bigl(\rho_S(v,R)-|R|\bigr)
=\gamma_{ve}(G-v)-\gamma_{ve}(G)+1
\geq2.
\]
Hence $\rho_S(v,R)\geq |R|+2$ for every
$R\subseteq S\setminus\{v\}$.

Conversely, suppose that such a set $S$ exists.
By Lemma~\ref{lem:exchange},
\[
\gamma_{ve}(G-v)
=\gamma_{ve}(G)-1+
\min_{R\subseteq S\setminus\{v\}}
\bigl(\rho_S(v,R)-|R|\bigr)
\geq\gamma_{ve}(G)+1.
\]
Thus $v\in V^{+}(G)$.
\end{proof}

To further investigate the structure of vertices in $V^{+}(G)$, we first
establish the following lemma concerning the vertex-edge domination number.
\begin{lem}\label{lem:2K2C5}
Let $G$ be a graph. If $\gamma_{ve}(G)\geq 2$, then $G$ contains
either an induced copy of $2K_2$ or a copy of $C_5$.
\end{lem}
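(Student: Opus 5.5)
We argue by contrapositive: assume $G$ contains no induced $2K_2$ and no $C_5$ (as a subgraph), and show that a single vertex $ve$-dominates $G$, i.e.\ $\gamma_{ve}(G)\le 1$. Since $\gamma_{ve}$ is additive over components, if two components contain edges then one edge from each gives an induced $2K_2$. So we may assume all edges lie in one component $H$; isolated vertices are irrelevant.

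Choose a vertex $v$ of $H$ and suppose $\{v\}$ is not a $ve$-dominating set. Then there is an edge $xy$ with $x,y\notin N[v]$. We want to pick $v$ cleverly so that this leads to a contradiction. Natural choices are a vertex of maximum degree, or a central vertex of $H$ (eccentricity at most $2$ would suffice). First, $H$ has diameter at most $3$ (more precisely, any induced path $P_5$ contains an induced $2K_2$ from its first and last edges), so every induced path has at most $4$ vertices. Take $v$ with $xy$ not dominated and consider a shortest path from $v$ to $\{x,y\}$. Since $x,y\notin N[v]$, its length is $2$ or $3$. In length $3$, say $v\,a\,b\,x$, the edges $va$ and $xy$ form an induced $2K_2$ unless $y$ is adjacent to $a$ or $v$. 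That contradicts minimality of the distance, since $y$ would lie at distance at most $2$. So every undominated edge $xy$ has an endpoint at distance exactly $2$ from $v$, say $v\,a\,x$.

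Now the plan is to show that, for a good choice of $v$, moving from $v$ to a neighbor $a$ strictly reduces the set of undominated edges, or else yields a forbidden configuration. Fix $v$, $a$, $x$, $y$ as above, with $y\notin N[v]$. If $ay\in E$, then $a$ dominates $xy$. Consider an edge $zw$ not dominated by $a$. If it was dominated by $v$, then some endpoint, say $z$, is adjacent to $v$. Since $z,w\notin N[a]$, the edges $zw$ and $xy$ (or $ax$) must be non-induced to avoid $2K_2$. Examining the adjacencies among $\{v,a,x,y,z,w\}$ with $vx,vy,az,aw$ absent, any remaining connection should produce a $5$-cycle such as $v\,z\,w\,x\,a\,v$ (when $wx\in E$) or $v\,z\,x\,y\,a$-type cycles. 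We apply the test to the pair $za$, $xy$ and to $vz$, $ax$, which must each be joined by an edge. This is the main obstacle: a careful case analysis showing that with no induced $2K_2$, every such pair of edges is linked, and the links close into a $C_5$ through $v$ and $a$.

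To make the argument terminate, choose $v$ minimizing the number of edges it fails to $ve$-dominate. Then show that some neighbor $a$ of $v$ on a path to an undominated edge dominates everything $v$ does plus that edge, unless a $C_5$ appears. This gives a contradiction with minimality, so $\gamma_{ve}(G)\le1$. If the case analysis becomes unwieldy, a fallback is induction on $|V(G)|$: delete a vertex and use $\gamma_{ve}(G)\le\gamma_{ve}(G-u)+1$ from Theorem~\ref{thm9}. That would require a structural choice of $u$ (for example a vertex of $H-N[v]$) and again reduces to the same local $C_5$/$2K_2$ analysis.
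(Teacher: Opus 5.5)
There is a genuine gap. Your preliminary facts are correct: all edges lie in one component, there is no induced $P_5$ so the diameter is at most $3$, and every edge $xy$ missed by $v$ has an endpoint at distance exactly $2$ from $v$. But the decisive step is never proved. That step is the claim that some vertex near $v$ $ve$-dominates everything $v$ does together with $xy$; you only announce it as ``the main obstacle'', and the induction fallback is not developed either.

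The local analysis you begin around one fixed neighbor $a$ with $ax\in E(G)$ cannot close by itself. Take an edge $zw$ with $z\in N(v)$ and $z,w\notin N[a]$. The pair $\{zw,ax\}$ only forces $xw\in E(G)$ or $xz\in E(G)$. The first gives your $5$-cycle $v\,z\,w\,x\,a\,v$, but the second gives only a $4$-cycle. This case really occurs. Consider the graph with edges $va,vz,ax,zx,xy,zw$, which has no induced $2K_2$ and no $C_5$. There $v$ misses only $xy$, the neighbor $a$ misses $zw$, and it is $z$ that works. So you cannot decide which vertex beats $v$ by examining a single neighbor.

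The missing idea is to test $xy$ against \emph{every} edge $vu$ with $u\in N(v)$, not only the one on a shortest path. Since $v$ is adjacent to neither $x$ nor $y$, avoiding an induced $2K_2$ forces every $u\in N(v)$ into $A=N(v)\cap N(x)$ or $B=N(v)\cap N(y)$. If $u\in A\setminus B$ and $z\in B\setminus A$, then $v\,u\,x\,y\,z\,v$ is a $C_5$. So $A$ and $B$ are nested; say $B=N(v)$. Then $N(v)\cup\{x\}\subseteq N(y)$, so $d(y)\ge d(v)+1$, contradicting the choice of $v$ of maximum degree; this is the paper's proof.

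Your own choice of $v$, minimizing the number of undominated edges, also works at this point. Every edge $ve$-dominated by $v$ has an endpoint in $N(v)\subseteq N(y)$, and $xy$ is incident with $y$. Hence $y$ itself leaves strictly fewer edges undominated, contradicting minimality. Note that $y$ is at distance $2$ from $v$, not a neighbor of $v$, so no case analysis over the neighbors of $v$ is needed.
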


\begin{proof}
Suppose that $\gamma_{ve}(G)\geq2$
and that $G$ contains neither an induced copy of $2K_2$
nor a copy of $C_5$.
Let $v$ be a vertex of maximum degree in $G$.
We claim that $\{v\}$ is a $ve$-dominating set of $G$.

Suppose not. Then $G-N[v]$ contains an edge $xy$. For every
$u\in N(v)$, the edges $vu$ and $xy$ do not induce a $2K_2$.
Since $v$ is adjacent to neither $x$ nor $y$, the vertex $u$ must
be adjacent to at least one of $x$ and $y$.

Let
\[
A=N(v)\cap N(x)
\quad\text{and}\quad
B=N(v)\cap N(y).
\]
Then
$$
A\cup B=N(v).
$$
If there exist $u\in A\setminus B$ and $z\in B\setminus A$, then
$v-u-x-y-z-v$ forms a $C_5$, a contradiction. Hence either
$A\subseteq B$ or $B\subseteq A$.

Without loss of generality, suppose that $A\subseteq B$. Then
$B=N(v)$. Since $xy\in E(G)$ and $x\notin N[v]$, we have
\[
d(y)\geq |N(v)|+1=d(v)+1,
\]
contradicting the choice of $v$ as a vertex of maximum degree. Therefore, $\{v\}$ is a $ve$-dominating set of $G$, and hence
$\gamma_{ve}(G)=1$, a contradiction.
\end{proof}

The converse of Lemma~\ref{lem:2K2C5} does not hold in general.
For example, take an induced $2K_2$ and add a
vertex adjacent to all four of its vertices. Then this new vertex
$ve$-dominates every edge of the resulting graph.

We now apply Lemma~\ref{lem:2K2C5} to the private edges of a
vertex in $V^{+}(G)$.

\begin{thm}\label{thm4}
Let $G$ be a graph and $v\in V^{+}(G)$. For any
$\gamma_{ve}(G)$-set $S$, let $U=V(G[pe_{G}[v,S]])\setminus \{v\}$.
Then the vertex-induced subgraph $G[U]$ contains either an induced copy of $2K_2$ or a copy of $C_5$.
\end{thm}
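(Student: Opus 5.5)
The plan is to reduce the statement to Lemma~\ref{lem:2K2C5}: it suffices to show that $\gamma_{ve}(G[U])\geq 2$. Since $v\in V^{+}(G)$, Proposition~\ref{prov+} gives $v\in\fix{G}$, so $v\in S$ for every $\gamma_{ve}(G)$-set $S$. In particular $pe_G[v,S]$ is defined, and $U$ consists of the endvertices other than $v$ of the private edges of $v$. I will argue by contradiction, assuming $\gamma_{ve}(G[U])\le 1$ and producing a $ve$-dominating set of $G-v$ of size at most $\gamma_{ve}(G)$. That would contradict $v\in V^{+}(G)$.

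The key structural step is to identify the edges of $G-v$ not $ve$-dominated by $S\setminus\{v\}$. Every edge of $G$ is $ve$-dominated by $S$. So an edge $e$ of $G-v$ not dominated by $S\setminus\{v\}$ must be dominated by $v$ and by no other vertex of $S$; that is, $e\in pe_G[v,S]$. Since $e$ is not incident with $v$, both of its endvertices lie in $U$, and hence $e\in E(G[U])$. Thus every edge of $G-v$ missed by $S\setminus\{v\}$ is an edge of $G[U]$. The private edges of the form $vu$ disappear in $G-v$ and need no attention.

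Now I split into the two cases.

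\emph{Case $\gamma_{ve}(G[U])=0$.} Then $G[U]$ is edgeless, so $S\setminus\{v\}$ already $ve$-dominates $G-v$. This gives $\gamma_{ve}(G-v)\le\gamma_{ve}(G)-1$.

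\emph{Case $\gamma_{ve}(G[U])=1$.} Pick $w\in U$ that $ve$-dominates every edge of $G[U]$. Because $w\neq v$, and every adjacency in $G[U]$ is also an adjacency in $G-v$, the vertex $w$ still $ve$-dominates all these edges in $G-v$. Hence $(S\setminus\{v\})\cup\{w\}$ is a $ve$-dominating set of $G-v$ of size at most $\gamma_{ve}(G)$.

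Both cases contradict $v\in V^{+}(G)$, so $\gamma_{ve}(G[U])\ge 2$. Lemma~\ref{lem:2K2C5}, applied to the graph $G[U]$, then yields an induced $2K_2$ or a copy of $C_5$ in $G[U]$, which is the claim.

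The only delicate point, and the one I would check most carefully, is the structural step. It relies on the private edges of $v$ that avoid $v$ having both endvertices in $U$, and on domination inside $G[U]$ transferring to $G-v$. Both follow directly from the definitions, since $U$ is defined as the set of endvertices of private edges other than $v$.
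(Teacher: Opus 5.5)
Your proposal is correct and follows the paper's proof essentially step for step. Both reduce to Lemma~\ref{lem:2K2C5} by showing $\gamma_{ve}(G[U])\ge 2$: the edgeless case is excluded because $S\setminus\{v\}$ would $ve$-dominate $G-v$, and the case $\gamma_{ve}(G[U])=1$ is excluded because $(S\setminus\{v\})\cup\{x\}$ would.
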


\begin{proof}
Since $v\in V^{+}(G)$, by Proposition~\ref{prov+}, we have $v\in \fix{G}$,
and hence $v\in S$.
We claim that $\gamma_{ve}(G[U])\geq 2.$

If $E(G[U])=\emptyset$, then every private edge of $v$ with respect
to $S$ is incident with $v$. Thus, $S\setminus\{v\}$ is a
$ve$-dominating set of $G-v$, contradicting $v\in V^{+}(G)$.
Hence $E(G[U])\neq\emptyset$.

Suppose that $\gamma_{ve}(G[U])=1$, and let $x\in U$ be a
$ve$-dominating vertex of $G[U]$. Every edge in
$E(G-v)\setminus pe_{G}[v,S]$ is $ve$-dominated by some vertex of
$S\setminus\{v\}$, while every edge in
$pe_{G}[v,S]\cap E(G-v)$ belongs to $G[U]$ and is $ve$-dominated
by $x$. Therefore,
$(S\setminus\{v\})\cup\{x\}$
is a $ve$-dominating set of $G-v$. Consequently,
\[
\gamma_{ve}(G-v)
\leq |(S\setminus\{v\})\cup\{x\}|
\leq |S|
=\gamma_{ve}(G),
\]
contradicting $v\in V^{+}(G)$.

Thus $\gamma_{ve}(G[U])\geq 2$. By
Lemma~\ref{lem:2K2C5}, $G[U]$ contains either an induced
$2K_2$ or a $C_5$.
\end{proof}

The next theorem shows that no vertex of $V^{+}(G)$
is adjacent to a vertex of $V^{-}(G)$.
\begin{thm}\label{thm6}
For two vertices $u\in V^{+}(G)$ and $v\in V^{-}(G)$, $d(u,v)\geq2$. Moreover, the bound is sharp.
\end{thm}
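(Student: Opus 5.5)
Since $V^{+}(G)$ and $V^{-}(G)$ are disjoint, $u\neq v$, so $d(u,v)\ge 1$. It therefore suffices to rule out $uv\in E(G)$, which I would do by contradiction. Then I would give a small example with $d(u,v)=2$.

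So suppose $uv\in E(G)$ with $u\in V^{+}(G)$ and $v\in V^{-}(G)$. By Proposition~\ref{pro1}(2), $N_G(v)\subseteq \bad{G-v}$, so $u$ is $\gamma_{ve}(G-v)$-bad. Instead of pushing that, I would use Observation~\ref{obss1}. Take any $S'\in\mathcal S(G-v)$. Since $u\in N_G[v]$, Observation~\ref{obss1} gives that $S'\cup\{u\}$ and $S'\cup\{v\}$ are both $\gamma_{ve}(G)$-sets. By Corollary~\ref{cor1}, $|S'|=\gamma_{ve}(G)-1$, so $u\notin S'$, because otherwise $|S'\cup\{u\}|<\gamma_{ve}(G)$. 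Hence $S'\cup\{v\}$ is a $\gamma_{ve}(G)$-set not containing $u$, and $u\notin\fix{G}$. This contradicts Proposition~\ref{prov+}, which says every vertex of $V^{+}(G)$ is fixed. Therefore $d(u,v)\ge 2$. The argument is short; the only point needing care is $u\notin S'$, which follows from the cardinality count.

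For sharpness I need a graph with $u\in V^{+}$ and $v\in V^{-}$ at distance exactly $2$. My first candidate is the spider from Theorem~\ref{thm9}: $K_{1,r}$ with every edge subdivided, $r\ge 3$, center $c$, middle vertices $a_i$, leaves $\ell_i$. Here $\gamma_{ve}=1$ with unique minimum set $\{c\}$, and $\gamma_{ve}(G-c)=r$, so $c\in V^{+}$. Removing $a_1$ leaves a spider with $r-1$ legs plus the isolated vertex $\ell_1$, so $\gamma_{ve}$ stays $1$. Removing a leaf also keeps $\gamma_{ve}=1$. So this spider has no vertex of $V^{-}$, and I need to modify it.

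Attach an extra component-free gadget. Keep $c$ with $r=3$ subdivided legs $c a_i \ell_i$, and also attach a pendant star: make $c$ adjacent to a new vertex $w$, and give $w$ two leaves $x,y$. Then every edge at $w$ is incident with $N[c]$, so $\{c\}$ still $ve$-dominates and $\gamma_{ve}=1$. Now $\{w\}$ alone does not dominate the leg edges $a_i\ell_i$, so $\{c\}$ remains the unique $\gamma_{ve}$-set and $c\in V^{+}$. The vertex $w$ is at distance $1$ from $c$, but removing it keeps $\gamma_{ve}=1$, so $w$ is not in $V^{-}$. To get $\gamma_{ve}$ to drop I need $\gamma_{ve}(G)\ge2$, so I would instead take a disjoint union-like structure joined through a path, e.g.\ $c$ joined by a path $c\,p\,v$ to a star $K_{1,2}$ centered at $v$. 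I would then verify the values directly by computing $\gamma_{ve}(G)$, $\gamma_{ve}(G-c)$ and $\gamma_{ve}(G-v)$ to confirm $c\in V^{+}$ and $v\in V^{-}$. Constructing and checking this example is the main obstacle; the inequality itself is immediate.
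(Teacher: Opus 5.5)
Your proof that $d(u,v)\ge 2$ is correct. It is essentially the paper's argument with the two steps in the opposite order. The paper first forces $u$ into every $\gamma_{ve}(G-v)$-set (otherwise $S\cup\{v\}$ is a $\gamma_{ve}(G)$-set missing the fixed vertex $u$), and then notes that such a set already $ve$-dominates $G$. You instead get $u\notin S'$ from the cardinality in Observation~\ref{obss1} and contradict fixedness directly.

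For sharpness you leave your last candidate unchecked; the paper simply exhibits the graph of Figure~\ref{fig}(a). Your candidate does work. Take the spider with legs $c\,a_i\,\ell_i$ ($i=1,2,3$), drop the $w$-gadget, and add the path $c\,p\,v$ with two leaves $x,y$ at $v$.
\begin{itemize}
\item Only $c$ can $ve$-dominate two of the edges $a_i\ell_i$, and $\{c\}$ misses $vx$. Hence $\gamma_{ve}(G)=2$, attained by $\{c,v\}$.
\item $G-c\cong 3K_2\cup K_{1,3}$, so $\gamma_{ve}(G-c)=4$.
\item $\{c\}$ $ve$-dominates $G-v$, so $\gamma_{ve}(G-v)=1$.
\end{itemize}
Therefore $c\in V^{+}(G)$, $v\in V^{-}(G)$ and $d(c,v)=2$.
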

\begin{figure}
        \centering
        \includegraphics[width=10cm]{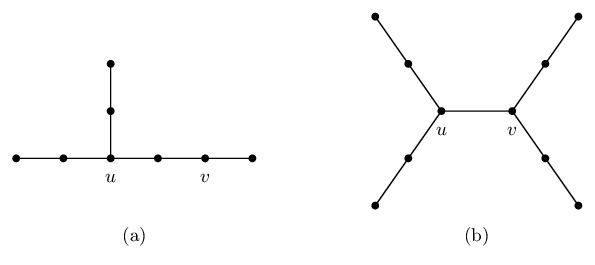}
        \caption{(a) $u\in V^{+}(G)$ and $v\in V^{-}(G)$ with $d(u,v)=2$.
(b) $u,v\in V^{+}(G)$.}
        \label{fig}
\end{figure}
\begin{proof}Clearly, $u$ and $v$ are not isolated vertices.
Suppose that there exist two vertices $u\in V^{+}(G)$ and $v\in V^{-}(G)$ such that $d(u,v)<2$. Then $uv\in E(G)$.
Let $S_{v}$ be a $\gamma_{ve}(G-v)$-set with cardinality $\gamma_{ve}(G)-1$.
We must have $u \in S_{v}$; otherwise, $S_{v}\cup\{v\}$ is a $\gamma_{ve}(G)$-set and $u \notin S_{v}\cup\{v\}$. This contradicts $u\in V^{+}(G)$.
Since $uv\in E(G)$, $u$ $ve$-dominates all edges which are incident with $v$. Thus, $S_{v}$ is a $ve$-dominating set of $G$.
This contradicts $v\in V^{-}(G)$. Hence, we have $d(u,v)\geq2$.

To see the sharpness of the bound, consider the graph in Figure \ref{fig}(a), 
where $\gamma_{ve}(G)=2$,
$\gamma_{ve}(G-u)=3$, and $\gamma_{ve}(G-v)=1$.
Thus $u\in V^+(G)$, $v\in V_{\mathrm{free}}^-(G)$,
and $d(u,v)=2$.
\end{proof}

The following is an immediate consequence of Theorem~\ref{thm6}.

\begin{cor}\label{cor7}
Let $G$ be a graph and $v\in V^{+}(G)$. Then
$N(v)\subseteq V^{+}(G)\cup V^{0}(G).$
\end{cor}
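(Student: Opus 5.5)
This follows directly from Theorem~\ref{thm6}, which says that no vertex of $V^{+}(G)$ is adjacent to a vertex of $V^{-}(G)$. Fix $v\in V^{+}(G)$ and an arbitrary $u\in N(v)$. The sets $V^{-}(G)$, $V^{0}(G)$ and $V^{+}(G)$ partition $V(G)$, since $\gamma_{ve}(G-u)$ is either smaller than, equal to, or larger than $\gamma_{ve}(G)$. So it is enough to show that $u\notin V^{-}(G)$.

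Suppose, for contradiction, that $u\in V^{-}(G)$. Then the pair $v\in V^{+}(G)$ and $u\in V^{-}(G)$ satisfies $uv\in E(G)$, so $d(u,v)=1<2$. This contradicts the bound $d(u,v)\geq 2$ in Theorem~\ref{thm6}.

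Hence every neighbour of $v$ lies in $V^{0}(G)\cup V^{+}(G)$, which gives $N(v)\subseteq V^{+}(G)\cup V^{0}(G)$. If $v$ is isolated, the inclusion holds trivially, and in fact no isolated vertex belongs to $V^{+}(G)$ at all. There is no real obstacle here: the only point to check is that the three classes exhaust $V(G)$, and that is immediate from their definitions.
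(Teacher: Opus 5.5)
Your proof is correct and matches the paper, which presents this corollary as an immediate consequence of Theorem~\ref{thm6}. You just make explicit that $V^{-}(G)$, $V^{0}(G)$, $V^{+}(G)$ partition $V(G)$ and that a neighbour of $v$ lying in $V^{-}(G)$ would be at distance $1<2$ from $v$.
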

The conclusion of Corollary~\ref{cor7} cannot be strengthened to
$N(v)\subseteq V^{0}(G)$.
Indeed, in the graph $G$ in Figure~\ref{fig}(b), $u$ and $v$ are
adjacent and $\gamma_{ve}(G)=2$, whereas
$\gamma_{ve}(G-u)=\gamma_{ve}(G-v)=3$.
\subsection{On vertices in $V^{0}(G)$}
We now study vertices whose removal preserves the $ve$-domination number.
\begin{obs}\label{obsv0}
Let $G$ be a graph. Then $V^{0}(G)=V_{\mathrm{fix}}^{0}(G)\cup \free{G}\cup \bad{G}$.
\end{obs}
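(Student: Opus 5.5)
The identity is a bookkeeping statement about two partitions of $V(G)$, so we prove it by intersecting them. The first partition is by good/bad status, $V(G)=\fix{G}\cup V_{\mathrm{free}}(G)\cup\bad{G}$, which is disjoint by definition. The second is by the effect of removal, $V(G)=V^-(G)\cup V^0(G)\cup V^+(G)$, also disjoint. Intersecting $V^0(G)$ with the first partition gives
\[
V^0(G)=\bigl(\fix{G}\cap V^0(G)\bigr)\cup\bigl(V_{\mathrm{free}}(G)\cap V^0(G)\bigr)\cup\bigl(\bad{G}\cap V^0(G)\bigr).
\]
By definition, the first term is $V_{\mathrm{fix}}^{0}(G)$ and the second is $\free{G}$. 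It therefore remains to show $\bad{G}\subseteq V^0(G)$; then the third term is simply $\bad{G}$.

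To show a bad vertex $v$ lies in $V^0(G)$, we exclude the other two classes.

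\emph{Not in $V^+(G)$.} By Proposition~\ref{prov+}, every vertex of $V^+(G)$ is fixed, so a bad vertex cannot lie in $V^+(G)$.

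\emph{Not in $V^-(G)$.} By Proposition~\ref{pro1}(1), $V^-(G)=\Free{G}\subseteq V_{\mathrm{free}}(G)$, which is disjoint from $\bad{G}$. One could instead argue directly via Observation~\ref{obss2}: any $\gamma_{ve}(G)$-set avoids $v$ and remains $ve$-dominating in $G-v$, giving $\gamma_{ve}(G-v)\le\gamma_{ve}(G)$, with the lower bound then coming from $v\notin V^-(G)$.

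Hence $v\in V^0(G)$. The only point needing care is that the cited propositions, Proposition~\ref{prov+} and Proposition~\ref{pro1}(1), cover isolated vertices as well. Both are stated for arbitrary $v$, and isolated vertices lie in $V^0(G)$ in any case, so there is no real obstacle.
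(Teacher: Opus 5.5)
Your proof is correct and is essentially the paper's argument: intersect the fixed/free/bad partition with $V^0(G)$, and use Proposition~\ref{pro1}(1) and Proposition~\ref{prov+} to get $\bad{G}\subseteq V^0(G)$. One slip is in your optional aside: the direct argument via Observation~\ref{obss2} only gives $\gamma_{ve}(G-v)\le\gamma_{ve}(G)$, so it rules out $V^+(G)$ rather than $V^-(G)$, and it then invokes $v\notin V^-(G)$, which is what that step was meant to prove; drop the aside and keep your main line through Proposition~\ref{pro1}(1).
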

\begin{proof}
By Proposition \ref{pro1}(1) and Proposition \ref{prov+},
$V^-(G)\subseteq V_{\mathrm{free}}(G)$ and
$V^+(G)\subseteq V_{\mathrm{fix}}(G)$.
Hence $V_{\mathrm{bad}}(G)\subseteq V^0(G)$.
Intersecting the partition
$V(G)=V_{\mathrm{fix}}(G)\cup V_{\mathrm{free}}(G)
\cup V_{\mathrm{bad}}(G)$
with $V^{0}(G)$ gives the desired equality.
\end{proof}
The graph in Figure~\ref{fig:fixed0} shows that a fixed vertex need
not belong to $V^+(G)$.
Indeed, $\{v\}$ is the unique $\gamma_{ve}(G)$-set, while
$G-v\cong P_5$ and $\gamma_{ve}(G-v)=\gamma_{ve}(G)=1$.
Hence $v\in V_{\mathrm{fix}}^{0}(G)$.

\begin{figure}
        \centering
        \includegraphics[width=4cm]{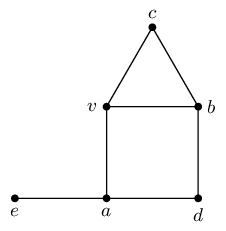}
        \caption{A graph $G$ with $v\in V_{\mathrm{fix}}^0(G)$.}
        \label{fig:fixed0}
\end{figure}

\begin{thm}\label{thm30}
Let $G$ be a graph and $v\in V(G)$. Then $v\in V^{0}(G)$
if and only if one of the following holds:

(1) $v\in \bad{G}$;

(2) there exists a $\gamma_{ve}(G)$-set $S$ containing $v$
    such that $\rho_S(v,R)\geq |R|+1$
    for every $R\subseteq S\setminus\{v\}$, with equality for
    at least one such $R$.
\end{thm}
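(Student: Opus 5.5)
The plan is to combine the exchange formula of Lemma~\ref{lem:exchange} with the classification of vertices in Observation~\ref{obsv0}, Proposition~\ref{pro1}(1) and Proposition~\ref{prov+}. The central quantity is
\[
m_S(v)=\min_{R\subseteq S\setminus\{v\}}\bigl(\rho_S(v,R)-|R|\bigr),
\]
defined for any $\gamma_{ve}(G)$-set $S$ containing $v$. By Lemma~\ref{lem:exchange}, $\gamma_{ve}(G-v)=\gamma_{ve}(G)-1+m_S(v)$. Hence $\gamma_{ve}(G-v)=\gamma_{ve}(G)$ holds exactly when $m_S(v)=1$. This in turn says that $\rho_S(v,R)\ge |R|+1$ for every $R\subseteq S\setminus\{v\}$, with equality attained for some $R$, which is precisely condition~(2). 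In particular, the value $m_S(v)$ does not depend on which $\gamma_{ve}(G)$-set $S\ni v$ is chosen.

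\emph{Sufficiency.} If (1) holds, then $v\in\bad{G}$, and the proof of Observation~\ref{obsv0} already gives $\bad{G}\subseteq V^0(G)$. Directly: any $\gamma_{ve}(G)$-set $S$ avoids $v$, so by Observation~\ref{obss2} we get $\gamma_{ve}(G-v)\le\gamma_{ve}(G)$, and Proposition~\ref{pro1}(1) excludes $v\in V^-(G)$ because $v$ is not free. If (2) holds, the set $S$ satisfies $m_S(v)=1$, and Lemma~\ref{lem:exchange} immediately yields $\gamma_{ve}(G-v)=\gamma_{ve}(G)$.

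\emph{Necessity.} Suppose $v\in V^0(G)$ and $v\notin\bad{G}$. Then $v$ is good, so some $\gamma_{ve}(G)$-set $S$ contains $v$. Lemma~\ref{lem:exchange} gives $m_S(v)=\gamma_{ve}(G-v)-\gamma_{ve}(G)+1=1$. Unpacking the minimum yields exactly the inequalities $\rho_S(v,R)\ge|R|+1$ for all $R$, with equality for a minimizing $R$. So (2) holds.

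No step here is a genuine obstacle. The only care needed is in sufficiency under (1), to rule out a decrease: one must either invoke Proposition~\ref{pro1}(1), since a bad vertex is not free, or argue directly via Observation~\ref{obss1}, which would place $v$ in some $\gamma_{ve}(G)$-set. Isolated $v$ is covered automatically: such a $v$ is bad, since otherwise removing it from a minimum set keeps that set dominating.
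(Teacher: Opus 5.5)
Your proposal is correct and matches the paper's proof. Both directions go through Lemma~\ref{lem:exchange} by reading condition (2) as $\min_{R}(\rho_S(v,R)-|R|)=1$. The case $v\in\bad{G}$ uses the inclusion $\bad{G}\subseteq V^0(G)$: the paper cites Observation~\ref{obsv0} for this, and you unpack it via Observation~\ref{obss2} and Proposition~\ref{pro1}(1).
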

\begin{proof}
Suppose that $v\in V^{0}(G)$. If
$v\in V_{\mathrm{bad}}(G)$, then (1) holds. Otherwise, there
exists a $\gamma_{ve}(G)$-set $S$ containing $v$. By
Lemma~\ref{lem:exchange}, we have
\[
\min_{R\subseteq S\setminus\{v\}}
\bigl(\rho_S(v,R)-|R|\bigr)
=
\gamma_{ve}(G-v)-\gamma_{ve}(G)+1
=1.
\]
Hence, $\rho_S(v,R)\geq |R|+1$
for every $R\subseteq S\setminus\{v\}$, with equality for at
least one such $R$.

Conversely, by Observation~\ref{obsv0}, we have $\bad{G}\subseteq V^{0}(G)$.
Now suppose that (2) holds. Then
\[
\min_{R\subseteq S\setminus\{v\}}
\bigl(\rho_S(v,R)-|R|\bigr)=1.
\]
By Lemma~\ref{lem:exchange},
$\gamma_{ve}(G-v)=\gamma_{ve}(G).$
Thus $v\in V^{0}(G)$.
\end{proof}

\begin{pro}\label{lems1}
Let $G$ be a graph and $v\in V(G)$.

(1) If $v\in V^{0}(G)$, then $\bigcup_{k\geq 2}V_{\mathrm{fix}}^{k}(G)\subseteq \fix{G-v}$.

(2) If $v\in \free{G}\cup \bad{G}$, then $\fix{G}\subseteq \good{G-v}$.

(3) If $v\in \bad{G}$, then $V^{-}(G)\subseteq V^{-}(G-v)$ and, for every $u\in N_{G}(v)$, $u \notin V^{-}(G-v)$.

\end{pro}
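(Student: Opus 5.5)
Part (1) uses the fact that $\gamma_{ve}(G-v)=\gamma_{ve}(G)$. Let $w\in V_{\mathrm{fix}}^{k}(G)$ with $k\ge 2$; then $w\ne v$, because $v\in V^0(G)$. Suppose some $\gamma_{ve}(G-v)$-set $D$ avoids $w$. To produce a contradiction, I will build a small $ve$-dominating set of $G-w$ from $D$:
\begin{itemize}
\item If $v$ is isolated, or if $D$ already $ve$-dominates the edges at $v$, then $D$ $ve$-dominates $G$. It has size $\gamma_{ve}(G)$ and avoids $w$, contradicting $w\in\fix{G}$.
\item In general, $D\cup\{v\}$ $ve$-dominates $G$. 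Deleting $w\notin D\cup\{v\}$, Observation~\ref{obss2} (applied to the dominating set, not necessarily minimum) shows that $D\cup\{v\}$ $ve$-dominates $G-w$.
\end{itemize}
This gives $\gamma_{ve}(G-w)\le\gamma_{ve}(G)+1$, contradicting $k\ge2$. Hence $w\in\fix{G-v}$. The key step is simply noting that any $ve$-dominating set of $G$ missing $w$ remains one for $G-w$.

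For (2), take $v\in\free{G}\cup\bad{G}$. In both cases $v\in V^0(G)$ and some $\gamma_{ve}(G)$-set $S$ avoids $v$: for a free vertex this is by definition, and for a bad vertex every such set avoids it. By Observation~\ref{obss2}, $S$ $ve$-dominates $G-v$. Since $|S|=\gamma_{ve}(G)=\gamma_{ve}(G-v)$, $S$ is a $\gamma_{ve}(G-v)$-set. It contains every vertex of $\fix{G}$, so $\fix{G}\subseteq\good{G-v}$.

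For (3), let $v\in\bad{G}$, so $\gamma_{ve}(G-v)=\gamma_{ve}(G)$ and every $\gamma_{ve}(G)$-set is a $\gamma_{ve}(G-v)$-set by Observation~\ref{obss2}.
\begin{itemize}
\item Take $x\in V^-(G)$ and a $\gamma_{ve}(G-x)$-set $S'$ of size $\gamma_{ve}(G)-1$. By Observation~\ref{obss1}, $S'\cup\{u\}\in\mathcal S(G)$ for all $u\in N_G[x]$. Since $v$ is bad, $v\notin S'$ and $v\notin N_G[x]$, so $x\ne v$ and $xv\notin E$. Then $S'$ $ve$-dominates $G-x$ and avoids $v$, hence $S'$ $ve$-dominates $(G-v)-x$. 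Therefore $\gamma_{ve}((G-v)-x)\le\gamma_{ve}(G)-1<\gamma_{ve}(G-v)$, i.e.\ $x\in V^-(G-v)$.
\item Take $u\in N_G(v)$ and suppose $u\in V^-(G-v)$. Let $D$ be a $\gamma_{ve}(G-v-u)$-set, of size $\gamma_{ve}(G)-1$ by Corollary~\ref{cor1}. Observation~\ref{obss1} in $G-v$ gives that $D\cup\{u\}$ is a $\gamma_{ve}(G-v)$-set. Since $u$ is adjacent to $v$, $u$ $ve$-dominates all edges at $v$, so $D\cup\{u\}$ $ve$-dominates $G$ and has size $\gamma_{ve}(G)$. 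This does not contradict anything yet.
\end{itemize}

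The second half of (3) is the main obstacle. Instead, I pick $y\in N_{G-v}[u]$ with $y$ adjacent to $v$ if possible; $y=u$ itself works. To reach a contradiction, I look for a vertex that makes $v$ good. Note that $D\cup\{v\}$ $ve$-dominates $G$: $D$ covers $G-v-u$, and $v\in N[u]$, so $v$ covers all edges at $u$ and at $v$. This set has size $\gamma_{ve}(G)$ and contains $v$, contradicting $v\in\bad{G}$. So $u\notin V^-(G-v)$.
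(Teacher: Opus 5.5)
Your proof is correct and follows the paper's argument in all three parts. In (1) you extend a $\gamma_{ve}(G-v)$-set avoiding $w$ by $v$ to get $\gamma_{ve}(G-w)\le\gamma_{ve}(G)+1$; in (2) you use a $\gamma_{ve}(G)$-set avoiding $v$; and in (3) you show that $v$ lies in no $\gamma_{ve}(G-x)$-set (the paper cites Proposition~\ref{pro1}(2) for this) and then that $D\cup\{v\}$ would be a $\gamma_{ve}(G)$-set containing the bad vertex $v$. The dead-end first bullet in the second half of (3), the stray remark about choosing $y$, and the extra case in (1) are unnecessary and can simply be deleted.
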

\begin{proof}
(1) Let $u\in \fixk{G}$ for some $k\geq2$,
and $S$ be any $\gamma_{ve}(G-v)$-set. Since $v\in V^{0}(G)$,
$|S|=\gamma_{ve}(G-v)=\gamma_{ve}(G).$
Suppose that $u\notin S$. Then $S\cup\{v\}$ is a $ve$-dominating
set of $G-u$. Hence,
$$
\gamma_{ve}(G-u)
\leq |S\cup\{v\}|
\leq \gamma_{ve}(G)+1.
$$
On the other hand, since $u\in \fixk{G}$,
$$
\gamma_{ve}(G-u)=\gamma_{ve}(G)+k
\geq \gamma_{ve}(G)+2,
$$
a contradiction. Thus $u\in S$.
Since $S$ is arbitrary,
$u\in \fix{G-v}$.

(2) Let $v\in \free{G}\cup \bad{G}$.
Then there exists a $\gamma_{ve}(G)$-set $S$ such that $v\notin S$.
By Observation~\ref{obss2}, $S$ is a $ve$-dominating set of $G-v$.
Moreover,
$|S|=\gamma_{ve}(G)=\gamma_{ve}(G-v),$
and hence $S$ is a $\gamma_{ve}(G-v)$-set. Since every vertex in
$\fix{G}$ belongs to $S$, it follows that
$\fix{G}\subseteq V_{\mathrm{good}}(G-v).$

(3) Let $v'\in V^{-}(G)$ and $S$ be a
$\gamma_{ve}(G-v')$-set. 
Since $v\in \bad{G}$, by Proposition \ref{pro1}(2), we have $v\in \bad{G-v'}$, so $v\notin S$. Thus, $S$ is a $ve$-dominating set of $G-\{v',v\}$.
Therefore, $$\gamma_{ve}(G-\{v',v\})\leq|S|=\gamma_{ve}(G)-1=\gamma_{ve}(G-v)-1.$$
Thus $v'\in V^{-}(G-v)$.
Hence, $V^{-}(G)\subseteq V^{-}(G-v)$.

Now suppose that $u\in N_G(v)\cap V^-(G-v)$, and let
$S'$ be a $\gamma_{ve}(G-\{u,v\})$-set.
Since $uv\in E(G)$, the vertex $v$ $ve$-dominates every edge
incident with $u$ or $v$, so $S'\cup\{v\}$ $ve$-dominates $G$.
Moreover, since $v\in \bad{G}\subseteq V^0(G)$,
\[
|S'\cup\{v\}|=\gamma_{ve}(G-\{u,v\})+1
=\gamma_{ve}(G-v)=\gamma_{ve}(G).
\]
Thus $S'\cup\{v\}$ is a $\gamma_{ve}(G)$-set,
contradicting $v\in \bad{G}$.
Hence, $u\notin V^-(G-v)$ for every $u\in N_G(v)$.
\end{proof}
The following theorem extends the private-edge bound in
\cite[Proposition~2]{Senthilkumar2023} to fixed vertices
without assuming that the minimum $ve$-dominating set is unique.
It also yields the counting bound in Corollary~\ref{V0Vplus}.
\begin{thm}\label{thm7}
Let $G$ be a graph. For each $v\in \fix{G}$, let $S_v$ be a
$\gamma_{ve}(G)$-set and
\[
X_v=
\left\{
x\in N_G(v):
x \text{ is an endvertex of some }
e\in pe_{G}[v,S_v]\cap E(G-v)
\right\}.
\]
Then $X_v\subseteq V^{0}(G)\setminus V_{\mathrm{fix}}^{0}(G)$ and
$|X_v|\geq \max\{2,\gamma_{ve}(G-v)-\gamma_{ve}(G)+1\}$. Moreover, the sets $X_v$ are pairwise disjoint.
\end{thm}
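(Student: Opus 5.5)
The plan is to fix $v\in\fix{G}$ and $S=S_v$, so that $v\in S$, and write $P=pe_G[v,S]\cap E(G-v)$. The basic observation is the following. Every edge of $P$ is $ve$-dominated by $v$ but not incident with $v$, so it has an endvertex in $N_G(v)$. Hence $X_v$ is exactly the set of neighbours of $v$ lying on an edge of $P$, and every edge of $P$ is incident with a vertex of $X_v$. Also, no $x\in X_v$ lies in $S$: if $x\in S\setminus\{v\}$, then $x$ would $ve$-dominate the private edge of $v$ at $x$. I will prove the three claims in turn, each using only this observation and earlier results.

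\emph{$X_v\subseteq V^0(G)\setminus V^0_{\mathrm{fix}}(G)$.} Let $x\in X_v$. Since $x\notin S$, $x$ is not fixed, so $x\notin V^0_{\mathrm{fix}}(G)$, and $x\notin V^+(G)$ by Proposition~\ref{prov+}. It remains to exclude $x\in V^-(G)$. Suppose $x\in V^-(G)$ and take $T\in\mathcal S(G-x)$. By Observation~\ref{obss1}, $T\cup\{x\}$ is a $\gamma_{ve}(G)$-set. Since $v$ is fixed, $v\in T\cup\{x\}$, so $v\in T$. Now $v$ is adjacent to $x$, so $v$ $ve$-dominates every edge at $x$. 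Together with the fact that $T$ $ve$-dominates $G-x$, this makes $T$ a $ve$-dominating set of $G$ of size $\gamma_{ve}(G)-1$, a contradiction. Hence $x\in V^0(G)$.

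\emph{The lower bound on $|X_v|$.} By the basic observation, $(S\setminus\{v\})\cup X_v$ $ve$-dominates $G-v$. This gives $\gamma_{ve}(G-v)\le \gamma_{ve}(G)-1+|X_v|$, which is the second term of the maximum. For $|X_v|\ge2$, first show $P\neq\emptyset$. By minimality of $S$, $pe_G[v,S]\neq\emptyset$. If all private edges were incident with $v$, Theorem~\ref{thm1} would give $v\in V^-(G)$, hence $v$ free by Proposition~\ref{pro1}(1), contradicting $v\in\fix{G}$. So $X_v\neq\emptyset$. If $X_v=\{x\}$, then $x$ $ve$-dominates every edge of $P$, and also every edge at $v$, since $x\in N(v)$. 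Thus $(S\setminus\{v\})\cup\{x\}$ is a $\gamma_{ve}(G)$-set avoiding $v$, again contradicting $v$ fixed. Hence $|X_v|\ge 2$.

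\emph{Disjointness.} Suppose $x\in X_u\cap X_v$ with $u\ne v$ both fixed. Then $u\in S_v$, since $S_v$ contains every fixed vertex. As $x\in N(u)$, $u$ $ve$-dominates every edge at $x$, in particular the edge of $pe_G[v,S_v]$ through $x$. This contradicts privacy. The only mildly delicate step is excluding $x\in V^-(G)$, and the argument above via Observation~\ref{obss1} handles it.
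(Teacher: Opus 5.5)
Your proposal is correct and follows the paper's proof essentially step for step. It has the same three-part structure, uses the same replacement $(S_v\setminus\{v\})\cup\{x\}$ to force $|X_v|\ge 2$ and the same set $(S_v\setminus\{v\})\cup X_v$ for the second lower bound, and reaches disjointness by the same privacy contradiction. The differences are cosmetic: you exclude $x\in V^-(G)$ by unwinding the argument directly via Observation~\ref{obss1} instead of citing Proposition~\ref{pro1}(2)--(3), and you obtain $X_v\neq\emptyset$ from Theorem~\ref{thm1} rather than by noting directly that $S_v\setminus\{v\}$ would otherwise $ve$-dominate $G-v$.
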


\begin{proof}
We first show that $|X_v|\geq \max\{2,\gamma_{ve}(G-v)-\gamma_{ve}(G)+1\}$. Since $v\in \fix{G}$, $v\in S_v$. If $X_v=\emptyset$, then every
private edge of $v$ with respect to $S_v$ is incident with $v$.
Hence, $S_v\setminus\{v\}$ is a $ve$-dominating set of $G-v$.
Thus $v\in V^{-}(G)$, and by Proposition \ref{pro1}(1),
$v\in V_{\mathrm{free}}(G)$, which contradicts $v\in \fix{G}$.
If $|X_v|= 1$, write $X_v=\{x\}$. Then every edge in
$pe_{G}[v,S_v]\cap E(G-v)$ is incident with $x$. Moreover, since
$x\in N_G(v)$, the vertex $x$ also $ve$-dominates every private edge
of $v$ that is incident with $v$. Thus, $x$ $ve$-dominates every edge
in $pe_{G}[v,S_v]$. 
Every edge outside $pe_G[v,S_v]$ is $ve$-dominated
by some vertex of $S_v\setminus\{v\}$.
Therefore, $(S_v\setminus\{v\})\cup\{x\}$
is a $ve$-dominating set of $G$.
Since $x\neq v$ is an endvertex of a private edge
of $v$ with respect to $S_v$, we have $x\notin S_v$.
Thus $(S_v\setminus\{v\})\cup\{x\}$ is a
$\gamma_{ve}(G)$-set not containing $v$, contradicting $v\in \fix{G}$.
Therefore, $|X_v|\geq 2.$

Moreover, $X_v$ $ve$-dominates every private edge
of $v$ that remains in $G-v$.
Thus, $(S_v\setminus\{v\})\cup X_v$ is a $ve$-dominating set of $G-v$, and
$\gamma_{ve}(G-v)\leq\gamma_{ve}(G)-1+|X_v|.$
It follows that $|X_v|\geq\gamma_{ve}(G-v)-\gamma_{ve}(G)+1$.

We next prove that $X_v\subseteq V^{0}(G)\setminus V_{\mathrm{fix}}^{0}(G)$.
Let $x\in X_v$.
If $x\in V^{-}(G)$, then $v\in N_G(x)$, and
Proposition~\ref{pro1}(2) and (3) gives
$v\in V_{\mathrm{bad}}(G-x)\cap V_{\mathrm{fix}}(G-x),$
a contradiction.
Hence $x\notin V^{-}(G)$.
Suppose that $x\in \fix{G}$.
Then $x\in S_v$.
By the definition of $X_v$, there exists an edge
$e\in pe_{G}[v,S_v]\cap E(G-v)$
incident with $x$. Thus, $x$ $ve$-dominates $e$, contradicting the
fact that $e$ is a private edge of $v$ with respect to $S_v$.
Hence $x\notin \fix{G}$, and therefore $x\notin V^+(G)$.
Thus $X_v\subseteq V^{0}(G)\setminus V_{\mathrm{fix}}^{0}(G)$.

Finally, let $u,v\in\fix{G}$ be distinct and suppose that
$x\in X_u\cap X_v$.
Since $x\in X_v$, there exists an edge
$e\in pe_G[v,S_v]\cap E(G-v)$ incident with $x$.
Since $u\in\fix{G}$, we have $u\in S_v\setminus\{v\}$
for any choice of $S_v$.
Moreover, $x\in X_u\subseteq N_G(u)$, so $u$ $ve$-dominates $e$,
contradicting $e\in pe_G[v,S_v]$.
Hence $X_u\cap X_v=\emptyset$, and the sets $X_v$ are
pairwise disjoint.
\end{proof}

\begin{cor}\label{V0Vplus}
For every graph $G$, 
\begin{equation*}
 |V^0(G)|\geq3|V_{\mathrm{fix}}^{0}(G)|+
\sum_{v\in V^+(G)}\bigl(\gamma_{ve}(G-v)-\gamma_{ve}(G)+1\bigr).
\end{equation*}
In particular,
\begin{equation*}
 |V^0(G)|\geq3|V_{\mathrm{fix}}^{0}(G)|+2|V^+(G)|
 \quad\text{and}\quad
 |\fix{G}|\leq\frac{|V(G)|-|V^-(G)|}{3}.
\end{equation*}
\end{cor}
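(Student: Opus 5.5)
We apply Theorem~\ref{thm7}, fixing one $\gamma_{ve}(G)$-set $S_v$ for each $v\in\fix{G}$. This gives pairwise disjoint sets $X_v\subseteq V^0(G)\setminus V_{\mathrm{fix}}^0(G)$. We restrict attention to the vertices of $V_{\mathrm{fix}}^0(G)\cup V^+(G)$, all of which lie in $\fix{G}$ by Proposition~\ref{prov+}. The key observation is that the sets $X_v$, for $v$ in this family, are pairwise disjoint subsets of $V^0(G)$, and each of them is also disjoint from $V_{\mathrm{fix}}^0(G)$. Hence
\[
|V^0(G)|\ \ge\ |V_{\mathrm{fix}}^0(G)|+\sum_{v\in V_{\mathrm{fix}}^0(G)}|X_v|+\sum_{v\in V^+(G)}|X_v|.
\]

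Next we insert the lower bounds from Theorem~\ref{thm7}. For $v\in V_{\mathrm{fix}}^0(G)$ we use $|X_v|\ge 2$, so each such $v$ contributes $1+2=3$. For $v\in V^+(G)$ we use $|X_v|\ge \gamma_{ve}(G-v)-\gamma_{ve}(G)+1$. Together these give the main inequality. The first consequence follows because $\gamma_{ve}(G-v)-\gamma_{ve}(G)\ge 1$ for $v\in V^+(G)$, so each summand is at least $2$.

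For the bound on $|\fix{G}|$, we split $\fix{G}=V_{\mathrm{fix}}^0(G)\cup V^+(G)$, using Proposition~\ref{pro1}(1) (which gives $\fix{G}\cap V^-(G)=\emptyset$) and Proposition~\ref{prov+}. The $V^0$ inequality yields $|V^0(G)|\ge 3|V_{\mathrm{fix}}^0(G)|+2|V^+(G)|$. Then
\[
|V(G)|-|V^-(G)|=|V^0(G)|+|V^+(G)|\ge 3|V_{\mathrm{fix}}^0(G)|+3|V^+(G)|=3|\fix{G}|,
\]
and dividing by $3$ gives the stated bound.

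The one point needing care is disjointness: the $X_v$ must avoid $V_{\mathrm{fix}}^0(G)$ so that the term $|V_{\mathrm{fix}}^0(G)|$ is not double counted. Theorem~\ref{thm7} gives this explicitly, since $X_v\subseteq V^0(G)\setminus V_{\mathrm{fix}}^0(G)$. The argument goes through for any fixed choice of the sets $S_v$, so no compatibility between the choices is needed.
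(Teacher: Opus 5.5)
Your proposal is correct and matches the paper's proof essentially step for step. You count $V_{\mathrm{fix}}^0(G)$ together with the pairwise disjoint sets $X_v\subseteq V^0(G)\setminus V_{\mathrm{fix}}^0(G)$ from Theorem~\ref{thm7} inside $V^0(G)$, insert the two lower bounds on $|X_v|$, and obtain the final inequality from the disjoint decomposition $\fix{G}=V_{\mathrm{fix}}^0(G)\cup V^+(G)$.
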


\begin{proof}By Theorem~\ref{thm7}, the sets $X_v$, $v\in\fix{G}$,
are pairwise disjoint and satisfy
\[
 \bigcup_{v\in\fix{G}}X_v
 \subseteq V^0(G)\setminus V_{\mathrm{fix}}^0(G).
\]
Moreover, $|X_v|\geq 2$ for every $v\in V_{\mathrm{fix}}^0(G)$,
while
$|X_v|\geq \gamma_{ve}(G-v)-\gamma_{ve}(G)+1$
for every $v\in V^+(G)$.
Since
$\fix{G}=V_{\mathrm{fix}}^0(G)\cup V^+(G)$,
we obtain
\begin{align*}
 |V^0(G)|-|V_{\mathrm{fix}}^0(G)|
 &\geq \sum_{v\in\fix{G}}|X_v|\\
 &=\sum_{v\in V_{\mathrm{fix}}^0(G)}|X_v|
   +\sum_{v\in V^+(G)}|X_v|\\
 &\geq 2|V_{\mathrm{fix}}^0(G)|
   +\sum_{v\in V^+(G)}
      \bigl(\gamma_{ve}(G-v)-\gamma_{ve}(G)+1\bigr).
\end{align*}
This proves~$|V^0(G)|\geq3|V_{\mathrm{fix}}^{0}(G)|+
\sum_{v\in V^+(G)}\bigl(\gamma_{ve}(G-v)-\gamma_{ve}(G)+1\bigr).$

Since $\gamma_{ve}(G-v)-\gamma_{ve}(G)\geq1$ for every $v\in V^+(G)$, we have $$|V^0(G)|\geq3|V_{\mathrm{fix}}^{0}(G)|+2|V^+(G)|.$$
Since $V_{\mathrm{fix}}(G)$ is the disjoint union of
$V_{\mathrm{fix}}^0(G)$ and $V^+(G)$, we obtain
$$
\begin{aligned}
|V(G)|-|V^-(G)|
&=|V^0(G)|+|V^+(G)|\\
&\ge 3|V_{\mathrm{fix}}^0(G)|+3|V^+(G)|\\
&=3|V_{\mathrm{fix}}(G)|.
\end{aligned}
$$
\end{proof}

Corollary~\ref{V0Vplus} yields the following characterization of $\mathcal{CVR}_{ve}$.
\begin{cor}\label{thm8}
Let $G$ be a graph. Then $\gamma_{ve}(G)\neq \gamma_{ve}(G-v)$ for every $v\in V(G)$ if and only if $V(G)=V^{-}(G)$.
\end{cor}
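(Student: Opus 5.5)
We have to show that $\gamma_{ve}(G)\neq\gamma_{ve}(G-v)$ for every $v$ (that is, $V^0(G)=\emptyset$) holds exactly when $V(G)=V^-(G)$. Since $V^-(G)$, $V^0(G)$, $V^+(G)$ partition $V(G)$, the condition $V^0(G)=\emptyset$ is equivalent to $V(G)=V^-(G)\cup V^+(G)$. So we must show that $V^0(G)=\emptyset$ forces $V^+(G)=\emptyset$. The reverse implication is immediate: if $V(G)=V^-(G)$, then $V^0(G)=\emptyset$ by disjointness of the partition.

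For the forward direction we use Corollary~\ref{V0Vplus}, which gives
\[
|V^0(G)|\geq 3|V_{\mathrm{fix}}^0(G)|+2|V^+(G)|.
\]
If $V^0(G)=\emptyset$, the left side is $0$. Both terms on the right are nonnegative, so $|V^+(G)|=0$. Therefore $V(G)=V^-(G)$.

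No real obstacle arises, since all the work is done in Theorem~\ref{thm7}: for $v\in V^+(G)\subseteq\fix{G}$ it produces a set $X_v$ with at least two vertices inside $V^0(G)$. In particular, a vertex of $V^+(G)$ can never exist without vertices of $V^0(G)$. The one point to check is that Corollary~\ref{V0Vplus} applies to every graph, including graphs with isolated vertices or edgeless graphs. It does: isolated vertices lie in $V^0(G)$, so such graphs fall under the equivalence trivially, with both sides false.
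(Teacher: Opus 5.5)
Your proposal is correct and follows the paper's proof essentially verbatim. The reverse direction is immediate from the partition $V(G)=V^-(G)\cup V^0(G)\cup V^+(G)$, and the forward direction applies $|V^0(G)|\geq 3|V_{\mathrm{fix}}^0(G)|+2|V^+(G)|$ from Corollary~\ref{V0Vplus} to force $V^+(G)=\emptyset$ when $V^0(G)=\emptyset$.
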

\begin{proof}
If $V(G)=V^{-}(G)$, then $\gamma_{ve}(G)\neq \gamma_{ve}(G-v)$ for every $v\in V(G)$. Suppose that $V^{0}(G)=\emptyset$. Then by Corollary \ref{V0Vplus}, we have $0=|V^{0}(G)|\geq 2|V^{+}(G)|$.
So $V^{+}(G)=\emptyset$. Since $V(G)=V^{-}(G)\cup V^{+}(G)\cup V^{0}(G)$,
$V(G)=V^{-}(G)$.
\end{proof}

It follows from Corollary \ref{corcvr} and Corollary \ref{thm8} that $G\in \mathcal{CVR}_{ve}$ if and only if for all $v\in V(G)$, there exists a $\gamma_{ve}(G)$-set $S$ such that $v\in S$ and $pe_{G}[v,S]\subseteq\{uv: u\in N_{G}(v)\}$, where $pe_{G}[v,S]\neq \emptyset$.

Corollary \ref{thm8} also implies that $V^{0}(G)=\emptyset$
only if $V^{+}(G)=\emptyset$. The converse does not hold.
For example, if $G=K_{1,n}$ with $n\geq 2$, then
$V^{+}(G)=\emptyset$, whereas $V^{0}(G)\neq \emptyset$.
We next show that $V^{0}(T)\neq\emptyset$ for every tree $T$
of order at least three.

\begin{thm}\label{thm2}
For any tree $T$ on $n\geq3$ vertices, $V^{0}(T)\neq\emptyset$.
\end{thm}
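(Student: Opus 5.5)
By Corollary~\ref{thm8}, $V^0(T)=\emptyset$ holds exactly when $V(T)=V^{-}(T)$. So I would argue by contradiction: assume every vertex of $T$ lies in $V^{-}(T)$. I would then exploit the leaves at the end of a longest path, using Theorem~\ref{thm1}, Observation~\ref{obss1} and Proposition~\ref{pro1}. Fix a diametral path $v_0v_1\cdots v_d$ of $T$. Since $n\ge 3$, we have $d\ge 2$, and every neighbour of $v_1$ other than $v_2$ is a leaf.

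\emph{Step 1: the leaf $v_0$.} Since $v_0\in V^-(T)$, take $D\in\mathcal S(T-v_0)$, so $|D|=\gamma_{ve}(T)-1$. Because $\gamma_{ve}(T-v_0)<\gamma_{ve}(T)$, the set $D$ cannot $ve$-dominate the edge $v_0v_1$. Hence $D\cap N[v_1]=\emptyset$.
\begin{itemize}
\item If $v_1$ had a second leaf neighbour $v_0'$, the edge $v_1v_0'$ of $T-v_0$ would be undominated, which is a contradiction. So $d(v_1)=2$.
\item The edge $v_1v_2$ must be dominated through $N[v_2]$. Since $v_1,v_2\notin D$, we get $D\cap (N(v_2)\setminus\{v_1\})\ne\emptyset$.
\item In particular $v_2$ is not a leaf. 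This rules out $d=2$, since then $T$ would be a star, and a star with $n\ge 3$ has at least two leaves at its centre $v_1$. So $d\ge 3$.
\end{itemize}
Applying the same argument to every leaf-child of every vertex shows that each support vertex on the "bottom level" has exactly one leaf, with degree $2$.

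\emph{Step 2: the vertex $v_1$ and the structure near $v_2$.} Next I would use $v_1\in V^-(T)$ and $v_2\in V^-(T)$. By Theorem~\ref{thm1} there is a $\gamma_{ve}(T)$-set $S$ containing $v_1$ all of whose private edges are incident with $v_1$. By Proposition~\ref{pro1}(2), $N(v_1)=\{v_0,v_2\}\subseteq V_{\mathrm{bad}}(T-v_1)$. In $T-v_1$ the vertex $v_0$ is isolated, so the relevant condition is that $v_2$ is bad in $T-v_1$. I would combine this with the set $D$ from Step 1, which contains a neighbour $u\ne v_1$ of $v_2$, and try to produce a $\gamma_{ve}(T-v_1)$-set containing $v_2$, contradicting badness.

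The natural exchange is to replace $u$ by $v_2$ in $D$. The extra edges at $N[u]\setminus N[v_2]$ must then be covered. This is where the diametral choice matters: every child of $v_2$ other than $v_3$ is either a leaf or a degree-$2$ support vertex, by Step 1 applied to it. I would therefore choose $u$ among such children, or among the children of $v_2$ generally, so that $N[u]\subseteq N[v_2]\cup\{\text{a leaf}\}$.

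\emph{Main obstacle.} The delicate case is when the only neighbour of $v_2$ in $D$ is $v_3$. Then the swap $v_3\to v_2$ loses edges deeper in the tree. In that case I would switch viewpoint: apply Proposition~\ref{pro1}(2) to $v_2\in V^-(T)$, which makes $v_3$ bad in $T-v_2$. I would then exhibit, from $D$ restricted to $T-v_2$, a $\gamma_{ve}(T-v_2)$-set containing $v_3$. The size count needs Corollary~\ref{cor1}: $\gamma_{ve}(T-v_2)=\gamma_{ve}(T)-1=|D|$. The component $v_0v_1$ of $T-v_2$ forces one extra vertex, so I would need to check that the component bookkeeping closes. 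This case analysis at $v_2$ is where I expect the real work to lie.
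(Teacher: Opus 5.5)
Your reduction via Corollary~\ref{thm8} to the hypothesis $V(T)=V^-(T)$ is legitimate, and Step~1 is correct. From $v_0\in V^-(T)$ you rightly get $D\cap N[v_1]=\emptyset$, hence $d(v_1)=2$ and $d\ge 3$. But the proof stops there. Step~2 is only a plan (``I would try'', ``where I expect the real work to lie''), and the delicate case is left open.

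The route you sketch for that case also does not close as described. By construction, $D$ misses the edge $v_0v_1$, so $D$ is not a $ve$-dominating set of $T-v_2$. The repair $D\cup\{v_1\}$ has size $\gamma_{ve}(T)$, one more than the value $\gamma_{ve}(T-v_2)=\gamma_{ve}(T)-1$ you are assuming. So no $\gamma_{ve}(T-v_2)$-set containing $v_3$ comes out of this. Your swap $u\to v_2$ does work when $u\neq v_3$ is a neighbour of $v_2$; it even yields a $ve$-dominating set of $T$ of size $\gamma_{ve}(T)-1$. That still leaves the case $D\cap (N(v_2)\setminus\{v_1\})=\{v_3\}$ unresolved.

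The missing idea is that no exchange or badness argument is needed. Once Step~1 gives $d(v_1)=2$, the graph $T-v_2$ is the disjoint union of the edge $v_0v_1$ and $H=T-\{v_0,v_1,v_2\}$. By additivity, $\gamma_{ve}(T-v_2)=\gamma_{ve}(H)+1$. On the other hand, a $\gamma_{ve}(H)$-set together with $v_2$ $ve$-dominates $T$: the vertex $v_2$ covers every edge at $v_2$, and it covers $v_0v_1$ because $v_1\in N[v_2]$. Hence $\gamma_{ve}(T)\le\gamma_{ve}(H)+1=\gamma_{ve}(T-v_2)$, so $v_2\notin V^-(T)$, contradicting your hypothesis.

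This is exactly the component count the paper applies to the vertex $w$. With it, your contradiction argument is complete. It also avoids Theorem~\ref{thm6}, which the paper's direct proof needs to rule out $w\in V^+(T)$. The price is that you rely on Corollary~\ref{thm8}, and hence on the counting machinery of Theorem~\ref{thm7}.
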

\begin{proof}Let $v$ be the neighbor of an endvertex of a longest path of $T$.
Then $d(v)\ge2$, and $v$ is adjacent to at least one leaf
and at most one non-leaf vertex.
If $v$ has no non-leaf neighbor, then $T$ is a star. In this case, removing any leaf does not change the $ve$-domination number, and hence
$V^{0}(T)\neq\emptyset$.
Thus, we may assume that $N(v)=\{u_{1},\ldots,u_{k},w\}$, where $k\geq 1$, the vertices $u_1,\ldots,u_k$ are leaves, and $w$ is a non-leaf vertex.

Suppose that $k\geq2$. Fix $i\in\{1,\ldots,k\}$ and let
$S$ be a $\gamma_{ve}(T-u_i)$-set. Since there exists
$j\neq i$, the edge $vu_j$ must be $ve$-dominated by some vertex of
$S$, and hence $S\cap N[v]\neq\emptyset$. Therefore, $S$ also
$ve$-dominates the edge $vu_i$, so $S$ is a $ve$-dominating set of
$T$. Thus,
$\gamma_{ve}(T)\leq \gamma_{ve}(T-u_i).$
Since $d(u_i)=1$, Theorem~\ref{thm9} gives
$\gamma_{ve}(T-u_i)\leq \gamma_{ve}(T).$
Hence $\gamma_{ve}(T-u_i)=\gamma_{ve}(T),$
and therefore $u_i\in V^{0}(T)$.

Now suppose that $k=1$. Then $v$ is adjacent to exactly
one leaf $u_1$ and $d(v)=2$.
Since $d(u_{1})=1$, $\gamma_{ve}(T-u_{1})\leq\gamma_{ve}(T)$. In addition, by Theorem~\ref{thm9}, $\gamma_{ve}(T)\leq\gamma_{ve}(T-u_{1})+1$.
Hence, we have $\gamma_{ve}(T-u_{1})=\gamma_{ve}(T)$ or $\gamma_{ve}(T-u_{1})=\gamma_{ve}(T)-1$.
In the first case, $u_1\in V^0(T)$. 
In the second case, $u_{1}\in V^{-}(T)$.
By Theorem \ref{thm6}, we have $v\notin V^{+}(T)$.
If $v\in V^{0}(T)$, the conclusion follows. 
It remains to consider $v\in V^{-}(T)$.

Let $H=T-\{w,v,u_{1}\}$.
Since $T-w$ is the disjoint union of $H$ and the edge $vu_1$, 
$\gamma_{ve}(T-w)=\gamma_{ve}(H)+1$.
A $\gamma_{ve}(H)$-set together with $w$ $ve$-dominates $T$, so
\[
\gamma_{ve}(T)\leq\gamma_{ve}(H)+1
=\gamma_{ve}(T-w).
\]
Thus $w\notin V^-(T)$. 
Moreover, since $vw\in E(T)$ and
$v\in V^-(T)$, Theorem~\ref{thm6} gives $w\notin V^+(T)$.
Hence $w\in V^0(T)$.
\end{proof}

The next result relates the vertices in $V_{\mathrm{fix}}^{0}(G)$
to cycles in $G$.

\begin{thm}\label{lemF}
Let $G$ be a graph and $v\in V(G)$. If $v\in V_{\mathrm{fix}}^{0}(G)$, then there exists a cycle $C$ in $G$ such that $v\in V(C)$.
\end{thm}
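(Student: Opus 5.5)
We argue by contraposition. Suppose $v\in V_{\mathrm{fix}}^{0}(G)$ but $v$ lies on no cycle of $G$. We will build a $\gamma_{ve}(G)$-set that avoids $v$, which contradicts $v\in\fix{G}$.

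\emph{Setup.} Since an isolated vertex lies in no minimum $ve$-dominating set, $v$ is not isolated. Let $N(v)=\{x_1,\dots,x_d\}$ with $d\ge 1$. Because $v$ lies on no cycle, every edge $vx_i$ is a bridge. Hence the $x_i$ lie in pairwise distinct components $C_1,\dots,C_d$ of $G-v$, where $x_i\in V(C_i)$. Let $G'$ be the union of the remaining components of $G$, those not containing $v$. Since $\gamma_{ve}$ is additive over components,
\[
\gamma_{ve}(G-v)=\sum_{i=1}^{d}\gamma_{ve}(C_i)+\gamma_{ve}(G').
\]
Fix a $\gamma_{ve}(G)$-set $S$; it contains $v$. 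Write $S_i=S\cap V(C_i)$ and $S'=S\cap V(G')$. Then $S'$ is a $\gamma_{ve}(G')$-set, and $\gamma_{ve}(G)=1+\sum_i|S_i|+\gamma_{ve}(G')$.

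\emph{Bounding each $\gamma_{ve}(C_i)$ between $|S_i|$ and $|S_i|+1$.}
\begin{itemize}
\item Upper bound. Inside $C_i$, the only edges that $v$ $ve$-dominates are those incident with $x_i$. So $S_i\cup\{x_i\}$ $ve$-dominates $C_i$, giving $\gamma_{ve}(C_i)\le|S_i|+1$.
\item Lower bound. If $D_i$ is any $\gamma_{ve}(C_i)$-set, then $(S\setminus S_i)\cup D_i$ still $ve$-dominates $G$. Indeed, $v$ still covers all edges incident with $v$, and the edges inside $C_i$ are covered by $D_i$. Minimality of $S$ then gives $|S_i|\le\gamma_{ve}(C_i)$.
\end{itemize}

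\emph{Using $v\in V^0(G)$.} Write $\gamma_{ve}(C_i)=|S_i|+\varepsilon_i$ with $\varepsilon_i\in\{0,1\}$. Comparing the two formulas, $\gamma_{ve}(G-v)=\gamma_{ve}(G)$ forces $\sum_i\varepsilon_i=1$. So there is exactly one index $i_0$ with $\gamma_{ve}(C_{i_0})=|S_{i_0}|+1$, and $\gamma_{ve}(C_i)=|S_i|$ for all $i\ne i_0$.

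\emph{The new set.} For each $i\neq i_0$, choose a $\gamma_{ve}(C_i)$-set $D_i$. Define
\[
T=\bigl(S_{i_0}\cup\{x_{i_0}\}\bigr)\cup\bigcup_{i\ne i_0}D_i\cup S'.
\]
\begin{itemize}
\item Size. $|T|=|S_{i_0}|+1+\sum_{i\neq i_0}|S_i|+|S'|=|S|=\gamma_{ve}(G)$.
\item Domination. Every edge inside some $C_i$ or inside $G'$ is $ve$-dominated by the corresponding part of $T$. Every edge incident with $v$ is $ve$-dominated by $x_{i_0}$, because $v\in N[x_{i_0}]$.
\end{itemize}
Thus $T$ is a $\gamma_{ve}(G)$-set with $v\notin T$, contradicting $v\in\fix{G}$. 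Therefore $v$ lies on a cycle.

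The main obstacle is the lower bound $|S_i|\le\gamma_{ve}(C_i)$. It relies on the bridge structure: swapping $S_i$ for $D_i$ does not disturb domination elsewhere, because $v$ stays in the set and the components $C_i$ interact only through $v$. Once both bounds hold, the counting forces exactly one index $i_0$ with $\varepsilon_{i_0}=1$, which is what makes the swap that removes $v$ possible.
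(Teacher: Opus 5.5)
Your proof is correct and follows essentially the same route as the paper: your traces $|S_i|$ of a fixed $\gamma_{ve}(G)$-set play the role of the paper's quantities $a_i$, the paper reduces to connected $G$ instead of carrying $G'$ along, and the final swap replacing $v$ by the neighbor $x_{i_0}$ is identical. The only unstated detail is $x_{i_0}\notin S_{i_0}$, which your size equality needs and which follows from $\varepsilon_{i_0}=1$; in any case, $|T|\le|S|$ already gives the contradiction.
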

\begin{proof}
Let $v\in V_{\mathrm{fix}}^{0}(G)$. Since the $ve$-domination number is
additive over components, we may assume that $G$ is connected.
Suppose that there is no cycle containing $v$ in $G$.

Since $v\in V_{\mathrm{fix}}^{0}(G)$, $v$ is not isolated.
Let $N(v)=\{u_1,\ldots,u_r\}.$ Then distinct vertices in $N(v)$ belong to distinct components of $G-v$; otherwise, there exists a cycle containing $v$.
Let these components be $H_1,\ldots,H_r$, where $u_i\in V(H_i)$.
Then each $u_i$ is the only vertex of $H_i$ adjacent to $v$.
For each $i$, let
$$
a_i=\min\big\{|A|: A\subseteq V(H_i)
\text{ and }A\cup\{v\} \ \
ve\text{-dominates }G[V(H_i)\cup\{v\}]\big\}.
$$
Set $b_i = \gamma_{ve}(H_i)$, and choose $B_i \in \mathcal{S}(H_i)$. Since
$B_i\cup\{v\}$ $ve$-dominates $G[V(H_i)\cup\{v\}]$, we have
$a_i\leq |B_i|=b_i.$
Moreover, since $u_i$ is the only vertex of $H_i$
adjacent to $v$, adding $u_i$ suffices to $ve$-dominate all edges
of $H_i$ that may be $ve$-dominated by $v$.
Thus $b_i\leq a_i+1.$
Hence
\equ{eq8}
{b_i-a_i\in\{0,1\}.}

Let $S$ be a $\gamma_{ve}(G)$-set.
Since $v$ is fixed, $v\in S$.
For each $i$, no vertex of another component of $G-v$
can $ve$-dominate an edge of $H_i$, so
$|S\cap V(H_i)|\geq a_i$.
Conversely, if $A_i$ attains $a_i$ for each $i$,
then $\{v\}\cup\bigcup_{i=1}^r A_i$ $ve$-dominates $G$.
Hence
\equ{eq9}
{\gamma_{ve}(G)=1+\sum_{i=1}^{r}a_i.}

On the other hand,
\equ{eq10}
{
\gamma_{ve}(G-v)=\sum_{i=1}^{r}b_i.
}
Since $v\in V_{\mathrm{fix}}^{0}(G)$,
$\gamma_{ve}(G-v)=\gamma_{ve}(G).$

By~(\ref{eq8})--(\ref{eq10}), we have
$\sum_{i=1}^{r}(b_i-a_i)=1.$
Hence, there is a unique index $j$ such that
$b_j=a_j+1,$
while $b_i=a_i$ for every $i\neq j$.
Choose $A_j$ attaining $a_j$, and for each $i\neq j$, choose $B_i\in\mathcal{S}(H_i)$. Then $A_j\cup\{u_j\}$ is a $ve$-dominating set of $H_j$. We have $u_j\notin A_j$; otherwise, $A_j$ would $ve$-dominate
$H_j$, contradicting $b_j=a_j+1$.
Since $|A_j\cup\{u_j\}|=a_j+1=b_j,$
the set $A_j\cup\{u_j\}$ is a $\gamma_{ve}(H_j)$-set.
Let $D=\left(\bigcup_{i\neq j}B_i\right)\cup A_j\cup\{u_j\}.$
Then $D$ $ve$-dominates every edge of $G-v$. Moreover, since
$u_j\in N(v)$, the vertex $u_j$ $ve$-dominates every edge
incident with $v$. Therefore, $D$ is also a $ve$-dominating set of $G$.
Furthermore,
$$|D|=\sum_{i=1}^{r}b_i=\gamma_{ve}(G-v)=\gamma_{ve}(G),$$
so $D$ is a $\gamma_{ve}(G)$-set not containing $v$, contradicting
$v\in \fix{G}$.
\end{proof}
\begin{cor}\label{thmF}
For every forest $F$, $\fix{F}=V^+(F)$.
\end{cor}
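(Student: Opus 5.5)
The plan is to show both inclusions, and one of them is already in hand. By Proposition~\ref{prov+}, every vertex of $V^+(F)$ is $\gamma_{ve}(F)$-fixed, so $V^+(F)\subseteq\fix{F}$. This holds for every graph, not only forests.

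For the reverse inclusion, take $v\in\fix{F}$. The vertices of $V^-(F)$ are exactly the free vertices in $V^-(F)$ by Proposition~\ref{pro1}(1), so $V^-(F)\subseteq V_{\mathrm{free}}(F)$. Hence a fixed vertex never lies in $V^-(F)$, and $v\in V^0(F)\cup V^+(F)$. Splitting $\fix{F}$ along the partition $V(F)=V^-(F)\cup V^0(F)\cup V^+(F)$ therefore gives
\[
\fix{F}=V_{\mathrm{fix}}^0(F)\cup\bigl(\fix{F}\cap V^+(F)\bigr)=V_{\mathrm{fix}}^0(F)\cup V^+(F).
\]
It remains to show that $V_{\mathrm{fix}}^0(F)=\emptyset$.

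This last step is immediate from Theorem~\ref{lemF}: any vertex of $V_{\mathrm{fix}}^0(F)$ would lie on a cycle of $F$, and a forest has none. So $V_{\mathrm{fix}}^0(F)=\emptyset$, and therefore $\fix{F}=V^+(F)$.

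There is no real obstacle beyond Theorem~\ref{lemF}. The only point to check is that isolated vertices cause no trouble. An isolated vertex lies in $V^0(F)$, but it is never fixed: it has no private edge, so it belongs to no minimum $ve$-dominating set and is in fact bad. This is consistent with Theorem~\ref{lemF}, which implicitly excludes isolated vertices.
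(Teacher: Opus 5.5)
Your proof is correct and is essentially the paper's argument: Propositions~\ref{pro1}(1) and~\ref{prov+} give $V_{\mathrm{fix}}(F)=V_{\mathrm{fix}}^0(F)\cup V^+(F)$, and Theorem~\ref{lemF} forces $V_{\mathrm{fix}}^0(F)=\emptyset$ because a forest has no cycles. Your remark that isolated vertices are bad (hence never fixed) is a correct extra check; the paper handles this point inside the proof of Theorem~\ref{lemF}.
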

\begin{proof}
By Proposition~\ref{pro1}(1) and Proposition~\ref{prov+},
$\fix{F}=V_{\mathrm{fix}}^0(F)\cup V^+(F)$.
Since $F$ contains no cycle, Theorem~\ref{lemF} gives
$V_{\mathrm{fix}}^0(F)=\emptyset$, and the result follows.
\end{proof}
This extends the implication (i)$\Rightarrow$(iii) of
\cite[Theorem~7]{Senthilkumar2023} to fixed vertices
in arbitrary forests, without assuming that the minimum
$ve$-dominating set is unique.
\section{Conclusion}
In this paper, we studied the effect of vertex removal
on the vertex-edge domination number.
We established sharp bounds on $\gamma_{ve}(G-v)$
and characterized the three vertex classes
$V^-(G)$, $V^0(G)$, and $V^+(G)$.
We also introduced an exchange parameter,
derived a weighted counting bound,
and proved that every fixed vertex in $V^0(G)$
lies on a cycle.

Our results imply that a nonempty forest $F$ belongs to
$\mathcal{CVR}_{ve}$ if and only if each component is isomorphic
to $K_2$. Indeed, every isolated vertex belongs to $V^0(F)$.
By Theorem~\ref{thm2} and additivity, every component
of order at least three also contains a vertex of $V^0(F)$.
Conversely, if all components are copies of $K_2$, deleting
any vertex decreases $\gamma_{ve}(F)$ by one.

For paths and cycles, the standard formulas
(see Peters~\cite{Peters1986} and
Lewis~\cite[Proposition~23]{Lewis2007}) are
\[
\gamma_{ve}(P_m)=\left\lceil\frac{m-1}{4}\right\rceil
\quad(m\geq1),
\qquad
\gamma_{ve}(C_n)=\left\lceil\frac{n}{4}\right\rceil
\quad(n\geq3).
\]
Since $C_n-v\cong P_{n-1}$ for every $v\in V(C_n)$,
we have
$C_n\in\mathcal{CVR}_{ve}$ if and only if
$n\equiv1,2\pmod4$.

It would be interesting to obtain further structural
characterizations of graphs in $\mathcal{CVR}_{ve}$,
to characterize graphs satisfying $V^0(G)=V(G)$,
and to determine whether the coefficients in the weighted
bound of Corollary~\ref{V0Vplus} can be improved.

\end{document}